\numberwithin{equation}{section}
\newcommand{\R}{{\mathbb R}}
\newcommand{\be}{\begin{equation}}
\newcommand{\ee}{\end{equation}}
\newcommand{\ben}{\begin{eqnarray*}}
\newcommand{\enn}{\end{eqnarray*}}
\newcommand{\va}{\varepsilon}
\newcommand{\ti}{\tilde}
\newtheorem{theorem}{\textbf Theorem}[section]
\newtheorem{lemma}{\textbf Lemma}[section]
\newtheorem{prop}{\textbf Proposition}[section]
\newtheorem{defin}{\textbf Definition}[section]
\def\endProof{{\hfill$\Box$}}
\begin{document}
\title{{\textbf{Radial boundary layers for the singular Keller-Segel model} }}
\author{Qianqian Hou\thanks{Institute for Advanced Study in Mathematics, Harbin Institute of Technology, Harbin 150001, People's Republic of China
({\tt qianqian.hou@connect.polyu.hk}).}}
\date{}
\maketitle

\begin{quote}
{\sc Abstract}
This paper is concerned with the diffusion limit (as $\va\rightarrow 0$) of radial solutions to a chemotaxis system with logarithmic singular sensitivity in a bounded interval with mixed Dirichlet and Robin boundary conditions. We use a Cole-Hopf type transformation  to resolve the logarithmic singularity and prove that the solution of the transformed system has a boundary-layer profile as $\va \to 0$, where the boundary layer thickness is of $\mathcal{O}(\va^{\alpha})$ with $0<\alpha<\frac{1}{2}$. By transferring the results back to the  original chemotaxis model via Cole-Hopf transformation, we find that boundary layer profile is present at the gradient of solutions and the solution itself is uniformly convergent with respect to  $\va>0$.

\noindent
{\sc MSC}: {35A01, 35B40, 35K57, 35Q92, 92C17}\\

\noindent
{\sc Keywords}: Chemotaxis, Boundary layers, Logarithmic singularity, Perturbation Method

\end{quote}
\maketitle

\section{Introduction}
Chemotaxis describes the oriented movement of species stimulated by uneven distribution of a chemical substance in the environment.
It is a significant mechanism accounting for abundant biological process/phenomenon, such as aggregation of bacteria \cite{Murray1, TLM2}, slime mould formation \cite{Hofer1}, fish pigmentation \cite{PainterFish}, tumor angiogenesis \cite{Chaplain93, CPZ1, CPZ3}, primitive streak formation \cite{PainterStreak}, blood vessel formation \cite{Gamba03}, wound healing \cite{Pettet96}. Mathematical models of chemotaxis were first proposed by Keller and Segel in their seminal works \cite{KS1, KS71a, KS71b}. In this paper, we are concerned with the following chemotaxis model:
\begin{eqnarray}\label{ks1}
\left\{\begin{array}{lll}
u_t= \nabla\cdot[D\nabla u-  \chi u\nabla (\ln c)], \quad (x,t)\in \Omega\times (0,\infty)\\
c_t=\varepsilon \Delta c-\mu u c,
\end{array}\right.
\end{eqnarray}
where $\Omega$ is a domain in $\mathbb{R}^n$ with smooth boundary. System \eqref{ks1} was first advocated in \cite{KS71b} to describe the traveling band propagation of bacterial chemotaxis observed in the experiment of Adler \cite{Adler66, Adler69}. It later appeared in the work by Levine et al \cite{LSN} to model the initiation of tumor angiogenesis, where
$u(x,t)$ represents the density of vascular endothelial cells and $c(x,t)$ denotes the concentration of signaling molecules vascular endothelial growth factor (VEGF). The parameters $D>0$, $\va\geq 0$ are diffusion coefficients of the endothelial cells and the chemical VEGF respectively, $\chi>0$ is the chemotactic coefficient measuring the intensity of chemotaxis and $\mu\geq 0$ is the chemical consumption rate by cells. In particular it was pointed out in \cite{LSN} that the chemical diffusion process is far less important comparing to its interaction with endothelial cells and thus the diffusion coefficient $\va$ could be small or negligible.
Despite of its biological significance, \eqref{ks1} is difficult to study mathematically due to the singularity of $\ln c$ at $c=0$. The well-known way to overcome this singularity was applying the following Cole-Hopf transformation
(cf. \cite{LeSl, LW11}):
\be\label{ch}
\vec{v}=-\nabla \ln c=-\frac
{\nabla c}{c}
\ee
to transform \eqref{ks1} into a system of conservation laws:
\be\label{hs}
    \left\{ \begin{array}{ll}
    u_t - \chi\nabla\cdot ( u \vec{v}) = D\Delta u, \\[1mm]
   \vec{v}_t -\nabla(u-\va|\vec{v}|^2 ) = \va \Delta \vec{v},\\
   (u, \vec{v})(x,0)=(u_0, \vec{v}_0)(x).
    \end{array} \right.
\ee

The transformed system \eqref{hs} attracts extensive attentions and numerous interesting results have been developed. We briefly recall these results by the dimension of spaces. In the one dimensional case, the global well-posedness along with large time behavior of solutions was investigated when $\Omega=\mathbb{R}$ in \cite{GXZZ,Li-Pan-Zhao} with $\va=0$ and in \cite{peng-ruan-zhu2012global,MWZ} with $\va>0$.
 When $\Omega=(0,1)$, authors in \cite{zhang-zhu2007,LPZ} obtained the unique global solution under Neumann-Dirichlet boundary conditions for $\va=0$, and the result was later extended to the case $\va>0$ in \cite{Wang-Zhao13,TWW}. The problem with $\va\geq 0$ is also globally well-posed \cite{LZ} with Dirichlet-Dirichlet boundary conditions. Furthermore, the existence and stability of traveling wave solutions were studied in \cite{JLW, LW09, LW10, LW11, LW12, LLW, Chae}. However to the best of our knowledge, except when it is associated with radially symmetric initial data, the known well-posedness results of problem  \eqref{hs} in the multi-dimension are merely confined to local large and global small solutions, cf. \cite{LLZ,Hao,DL,PWZ, WWZ, WXY} for details when $\Omega=\mathbb{R}^n$ ($n\geq 2$) and \cite{LPZ, rebholz2019initial} when $\Omega \subset \mathbb{R}^n$ ($n\geq 2$) is bounded.
 If the initial data are radially symmetric and $\Omega=B_{R}(0):=\{x=(x_1,x_2,\cdots,x_n)\in \mathbb{R}^{n}\,|\,\, |x|< R\}$ ($n\geq 2$), Winkler \cite{winkler2017radial} recently proved that \eqref{ks1} with $\va>0$ subject to Neumann boundary conditions 
admits a global generalized (weak) solution which is radially symmetric and smooth away from the origin $x=0$.

 In addition to the above well-posedness results, the asymptotic behavior of solutions as $\va\rightarrow 0$ is a particularly relevant issue
 (mentioned in \cite{LSN} that $\va$ could be small/negligible) and has been studied in several circumstances for equation \eqref{hs} and thus for the original equation \eqref{ks1} via transformation \eqref{ch}. For illustration, denote by $(u^\va,\vec{v}^{\,\va})$ and $(u^0,\vec{v}^{\,0})$ the solutions of \eqref{hs} with $\va>0$ and $\va=0$ respectively (when $n=1$ we shall use the notation $v$ instead of $\vec{v}$ since it is a scalar). First in unbounded domains, it has been shown that both traveling wave solutions (cf. \cite{Wang-DCDSB-review}) in $\R$ and the global small-data solution of the Cauchy problem (cf. \cite{WXY, PWZ}) in $\R^n(n=2,3)$ are uniformly convergent in $\varepsilon$, namely $(u^\va,\vec{v}^{\,\va})$ converge to $(u^0,\vec{v}^{\,0})$ in $L^\infty$-norm as $\varepsilon \to 0$.
  With $\Omega=(0,1)$, the solutions still converge (cf. \cite{Wang-Zhao13}) as $\va\rightarrow 0$ when \eqref{hs} is endowed with the following mixed homogeneous Neumann-Dirichlet boundary conditions
$$u_x|_{x=0,1}=v|_{x=0,1}=0.
$$
However if both of $u$ and $v$ are subject to the Dirichlet boundary conditions, one can not preassign the boundary value for $v^0$ when $\va=0$ since it is intrinsically determined by the second equation of \eqref{hs} as $v^0|_{x=0,1}=v_0|_{x=0,1}+\int_{0}^t u_x^0|_{x=0,1}\,d\tau $. Thus the plausible Dirichlet boundary conditions should be prescribed as:
\begin{equation}\label{bc}
\left\{\aligned
&u|_{x=0,1}=\bar{u}\geq 0,\quad v|_{x=0,1}=\bar{v}, \ & \mathrm{if}\ \varepsilon>0,\\
&u|_{x=0,1}=\bar{u}\geq 0, \ & \ \ \ \ \mathrm{if} \ \varepsilon=0,
\endaligned\right.
\end{equation}
where $\bar{u}\geq 0,$ $\bar{v}\in \mathbb{R}$ are constants.
  In this case, if the boundary values of $v$ with $\va>0$ and $\va=0$ do not match, then the solution component $v$ would diverge near the end points $x=0,1$ as $\va\rightarrow 0$ and this phenomenon is termed as the boundary layer effect, which has been an important topic in the fluid mechanics \cite{S} when investigating the inviscid limit of the Navier-Stokes equations near a boundary and has attracted extensive studies (cf. \cite{F, Frid99, Frid20, JZ, WX, XY, YZZ})  since the pioneering work \cite{P} by Prandtl in 1904. In particular, this boundary layer effect for problem \eqref{hs}-\eqref{bc} has been recently numerically verified in \cite{LZ} and rigorously proved in \cite{HWZ}.
Enlightened by these results, it is natural to expect that \eqref{hs} in multi-dimension ($n\geq 2$) possesses boundary layer solutions as well when prescribing appropriate Dirichlet boundary conditions. In particular we aim to investigate this issue for its radial solutions in the present paper. To this end, we  first rewrite \eqref{ks1} in its radially symmetric form by assuming that the solutions $(u,c)$ are radially symmetric, depending only on the radial variable $r=|x|$ and time variable $t$. In a domain bounded by two concentric sphere, i.e. $\Omega=\{x=(x_1,x_2,\cdots,x_n)\in \mathbb{R}^n\,|\,\,\, 0<a<|x|<b
 \}$, \eqref{ks1} reads as
\begin{eqnarray}\label{e1}
\left\{\begin{array}{lll}
u_t= \frac{1}{r^{n-1}}(r^{n-1}u_{r})_r
-\frac{1}{r^{n-1}}\left(r^{n-1}u\frac{c_r}{c}\right)_r
,\quad (r,t)\in (a,b)\times (0,\infty)\\
c_t=\va \frac{1}{r^{n-1}}(r^{n-1}c_{r})_r-uc,\\
u(r,0)=u_0(r),\quad c(r,0)=c_0(r),
\end{array}\right.
\end{eqnarray}
where $D=\chi=\mu=1$ have been assumed without loss of generality.
 Similar as deriving \eqref{hs} from \eqref{ks1}, we apply the following Cole-Hopf type transformation
\be\label{e2}
v=-(\ln c)_{r}=-\frac{c_r}{c},
\ee
which turns \eqref{e1} into
\begin{eqnarray}\label{e3}
\left\{\begin{array}{lll}
u_t= \frac{1}{r^{n-1}}(r^{n-1}u_{r})_r
+\frac{1}{r^{n-1}}(r^{n-1}uv)_r, \qquad\,(r,t)\in (a,b)\times (0,\infty)\\
v_t=\va\frac{1}{r^{n-1}} (r^{n-1}v_{r})_r-\va\frac{n-1}{r^2}v-\va(v^2)_r+u_r,\\
(u,v)(r,0)=(u_0,v_0)(r).
\end{array}\right.
\end{eqnarray}
 Similar to \eqref{bc}, the Dirichlet boundary conditions for \eqref{e3} are prescribed as
\be\label{e4}
\left\{\begin{array}{lll}
u|_{r=a,b}=\bar{u},\,\,
v|_{r=a}=\bar{v}_1,\,v|_{r=b}=\bar{v}_2,\quad {\rm if} \,\, \va>0,\\
u|_{r=a,b}=\bar{u},\quad \qquad \qquad \qquad \qquad \quad\, \,\,{\rm if} \, \,\va=0.
\end{array}\right.
\ee

In this paper, we shall investigate  the asymptotic behavior of solutions to \eqref{e3}-\eqref{e4} as $\va\rightarrow 0$ for $n\geq 2$ (if $n=1$, it coincides with the one-dimensional model \eqref{hs}-\eqref{bc} which has been studied in \cite{HWZ} as aforementioned). In particular, the solution component $v$ is proved to have a boundary layer due to the mismatch of its boundary values as $\va\rightarrow 0$ (see Theorem \ref{t1}).

\section{Main results}
To study the boundary layer effect, we first present the global well-posedness and regularity estimates for solutions of \eqref{e3}-\eqref{e4} with $\va=0$ in Theorem \ref{p1}. By these estimates, we then state the main result on the convergence for $u$ and boundary layer formation by $v$ in Theorem \ref{t1}. Finally, the result is converted to the original chemotaxis model \eqref{e1} via \eqref{e2}. We begin with introducing some notations.\\
\newline
\textbf{Notations.} Without loss of generality, we assume $0\leq \va<1$ since the zero diffusion limit as $\va\rightarrow 0$ is our main concern. Throughout this paper, unless specified, we use $C$ to denote a generic positive constant which is independent of $\va$ and dependent on $T$. In contrast, $C_0$ denotes a generic constant independent of $\va$ and $T$. For simplicity, $L^p$ represents $L^p(a,b)$ with $1\leq p\leq \infty$, $H^k$ denotes $H^{k}(a,b)$ with $k\in \mathbb{N}$ and $\|\cdot\|$ stands for $\|\cdot\|_{L^2}$. Moreover, if $f(r,t)\in L^p(a,b)$ for fixed $t>0$, we use $\|f(t)\|_{L^p}$ to denote $\|f(\cdot,t)\|_{L^p}$.\\

The first result is on the global well-posedness of \eqref{e3}-\eqref{e4} with $\va=0$.
\begin{theorem}\label{p1}
Assume that $(u_0,v_0)\in H^2\times H^2$ with $u_0\geq 0$ satisfy the compatible conditions
$u_0(a)=u_0(b)=\bar{u}$.
Then the initial-boundary value problem \eqref{e3}-\eqref{e4} with $\va=0$ has a unique solution $(u^0,v^0)\in C([0,\infty);H^2\times H^2)$  such that the following estimates hold true.\\
(i) If $\bar{u}>0$, there is a constant $C_0$ independent of $t$ such that
\be\label{k23}
\begin{split}
\|u^0(t)-\bar{u}\|_{H^2}^2+\|v^0(t)\|_{H^2}^2+\int_0^t \big(\|u^0(\tau)-\bar{u}\|_{H^3}^2
+\bar{u}\|(r^{n-1}v^0)_r(\tau)\|_{H^1}^2\big)\,d\tau\leq C_0.
\end{split}
\ee
Moreover,
\be\label{i10}
\lim_{t\rightarrow \infty}\|u^0(t)-\bar{u}\|_{L^\infty}=0.
\ee
(ii) If $\bar{u}=0$, for any $0<T<\infty$, there exists a constant $C$ depending on $T$ such that
\be\label{k26}
\|u^0\|_{L^\infty(0,T;H^2)}+\|v^0\|_{L^\infty(0,T;H^2)}
+\|u^0\|_{L^2(0,T;H^3)}\leq C.
\ee
\end{theorem}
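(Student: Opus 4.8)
The plan is to build the solution locally by a fixed point argument and then to propagate global \emph{a priori} bounds; the uniform-in-time estimates of part (i) will rest on a hypocoercivity-type structure the system acquires once $\va=0$. Throughout write $\Delta_r f=\tfrac1{r^{n-1}}(r^{n-1}f_r)_r$, set $w:=u^0-\bar u$ (so that $w|_{r=a,b}=0$) and $z:=\tfrac1{r^{n-1}}(r^{n-1}v^0)_r=v^0_r+\tfrac{n-1}{r}v^0$; since $0<a\le r\le b$ the weighted functionals $\int_a^b r^{n-1}|\cdot|^2\,dr$ are equivalent to the usual $L^2$ norms and $\|(r^{n-1}v^0)_r\|_{H^1}\simeq\|z\|_{H^1}$. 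With $\va=0$ the second equation of \eqref{e3} degenerates to the pointwise-in-$r$ relation $v^0_t=u^0_r$, i.e. $v^0(r,t)=v_0(r)+\int_0^t u^0_r(r,\tau)\,d\tau$. For local existence I would freeze $v$ in $C([0,T_0];H^2)$, solve the then-\emph{linear} parabolic problem $u_t=\Delta_r u+\tfrac1{r^{n-1}}(r^{n-1}uv)_r$ with $u|_{r=a,b}=\bar u$ and $u|_{t=0}=u_0$ (permissible thanks to the compatibility $u_0(a)=u_0(b)=\bar u$), obtaining $u\in C([0,T_0];H^2)\cap L^2(0,T_0;H^3)$, set $\widehat v:=v_0+\int_0^t u_r\,d\tau\in C([0,T_0];H^2)$, and verify that $v\mapsto\widehat v$ is a contraction for $T_0$ small; its fixed point is the unique local solution.

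The mechanism I would exploit for the global estimates is the following. The $u$-equation rewrites as $w_t=\Delta_r w+\tfrac1{r^{n-1}}(r^{n-1}wv^0)_r+\bar u\,z$, so the coupling $\bar u\,z$ in this equation and the source $u^0_r$ in $v^0_t=u^0_r$ are skew-adjoint in the weighted $L^2$ pairing (here $w|_{r=a,b}=0$ is used); testing the $w$-equation by $r^{n-1}w$, the $v^0$-equation by $\bar u\,r^{n-1}v^0$, and adding therefore eliminates them, leaving after one further integration by parts
\begin{equation*}
\frac12\frac{d}{dt}\int_a^b r^{n-1}\big(w^2+\bar u\,(v^0)^2\big)\,dr+\int_a^b r^{n-1}w_r^2\,dr=\frac12\int_a^b r^{n-1}z\,w^2\,dr ,
\end{equation*}
whose right-hand side is absorbed via $\|w\|_{L^\infty}^2\le2\|w\|\,\|w_r\|$ and Young's inequality. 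Next, differentiating $v^0_t=u^0_r$ radially gives $z_t=\Delta_r w$; eliminating $\Delta_r w$ from the $w$-equation produces the key identity
\begin{equation*}
z_t+\bar u\,z=w_t-\frac1{r^{n-1}}\big(r^{n-1}wv^0\big)_r .
\end{equation*}
When $\bar u>0$ this is a linearly damped transport equation for the radial divergence $z$ of $v^0$; testing it by $r^{n-1}z$ and using $w_{rt}=v^0_{tt}$ to integrate $\int_a^b r^{n-1}w_t z\,dr$ by parts in time leads to a differential inequality for $\tfrac12\|z\|^2+\int_a^b r^{n-1}v^0 w_r\,dr$ with dissipation $\bar u\|z\|^2$ whose leading forcing is $\|w_r\|^2$ — exactly the dissipation of the previous display, hence time-integrable — while the auxiliary cross term is controlled by the first energy. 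This is the origin of the $\bar u\,\|(r^{n-1}v^0)_r\|_{H^1}^2$ dissipation in \eqref{k23}, and it has no counterpart when $\bar u=0$.

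To reach the $H^2$ level I would differentiate the $w$-equation once and twice in $r$, and the $z$-identity once in $r$, test each against the matching weighted $r$-derivative of $w$ (respectively $z$), and express every $v^0$-derivative through $z,z_r$ and lower-order terms via $v^0_r=z-\tfrac{n-1}{r}v^0$ and its differentiated form, so that no derivative is genuinely lost even though $v^0$ solves only a time-ODE; all nonlinear terms are quadratic or cubic and are handled by Gagliardo--Nirenberg and Agmon inequalities on $(a,b)$. Summing with small coefficients gives, for $\mathcal E(t)\simeq\|w(t)\|_{H^2}^2+\|v^0(t)\|_{H^2}^2$, an inequality $\tfrac{d}{dt}\mathcal E+c_0(\|w\|_{H^3}^2+\bar u\|z\|_{H^1}^2)\le C(1+\mathcal E)\mathcal E$. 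If $\bar u=0$, Gronwall on $[0,T]$ together with $v^0(t)=v_0+\int_0^t u^0_r\,d\tau$ gives \eqref{k26}. If $\bar u>0$, Poincaré ($w|_{r=a,b}=0$) upgrades $\|w_r\|^2$ to $\|w\|_{H^1}^2$ in the dissipation; assembling the three estimates above into a single weighted Lyapunov functional and running a continuity argument yields $\mathcal E(t)\le C_0$ and the integral bound in \eqref{k23}, while a time-weighted ($e^{\delta t}$) version of the same estimates gives exponential decay of $\mathcal E$ — in particular $\int_0^\infty\|u^0-\bar u\|_{H^3}\,d\tau<\infty$, whence the uniform $H^2$-bound for $v^0$ from the same representation, and $\|u^0(t)-\bar u\|_{L^\infty}^2\le2\|w\|\,\|w_r\|\to0$, i.e. \eqref{i10}. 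Finally the local solution and these bounds combine through the usual continuation criterion to produce the global solution.

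The hardest part will be making the coupled energy estimates close for \emph{arbitrary} (not small) $H^2$ data with all constants independent of $\va$, and — when $\bar u>0$ — with constants that do not deteriorate as $t\to\infty$: the cubic interactions such as $\tfrac12\int_a^b r^{n-1}z\,w^2\,dr$ and their $H^2$-level analogues must be absorbed into the available dissipation, and since $v^0$ carries no dissipation of its own, its uniform-in-time control — hence part (i) as a whole — is possible only by exploiting the damped-divergence identity in tandem with the parabolic smoothing of $u^0$ and the exponential relaxation of $u^0$ toward $\bar u$ that it produces. The failure of this relaxation when $\bar u=0$ is precisely why only $T$-dependent bounds are asserted in part (ii).
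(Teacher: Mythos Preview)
Your damped-divergence identity $z_t+\bar u\,z=w_t-\tfrac1{r^{n-1}}(r^{n-1}wv^0)_r$ is exactly the paper's equation (3.7), and from that point on your hierarchy mirrors the paper's Lemmas~3.2--3.3 closely. The genuine gap is one level below, at the base $L^2$ estimate when $\bar u>0$.

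After your first energy identity the cubic term reads (equivalently) $-\int_a^b r^{n-1}wv^0 w_r\,dr$, and absorbing it with $\|w\|_{L^\infty}^2\le 2\|w\|\,\|w_r\|$ plus Young leaves a residual of size $C\|w\|^2\|v^0\|^4$ (or $C\|z\|^2\|w\|^2$ if you keep the $z$-form). Either way the resulting differential inequality is of type $\tfrac{d}{dt}\mathcal E\le C\mathcal E^3$, which blows up in finite time for large data; a continuity argument cannot fix this without smallness. The paper breaks this circularity by first deriving an \emph{unconditional} relative-entropy bound: testing the $u^0$-equation against $r^{n-1}(\ln u^0-\ln\bar u)$ yields
\[
\int_a^b r^{n-1}\Big[u^0\ln u^0-u^0-(\bar u\ln\bar u-\bar u)-\ln\bar u\,(u^0-\bar u)\Big]dr
+\frac12\int_a^b r^{n-1}(v^0)^2\,dr
+\int_0^t\!\!\int_a^b r^{n-1}\frac{(u^0_r)^2}{u^0}\,dr\,d\tau\le C_0,
\]
which is size-independent because the entropy is convex. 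The Fisher-information term then controls $\|\tilde u(t)\|_{L^\infty}^2$ through $|\tilde u|\le\big(\int u^0\big)^{1/2}\big(\int (u^0_r)^2/u^0\big)^{1/2}$, making the coefficient in front of $\|v^0\|^2$ in the $L^2$ estimate time-integrable and closing (3.3) by Gronwall. Without this entropy step your scheme only yields part~(ii) (where $T$-dependent Gronwall suffices, as the paper also notes), but not the uniform-in-time bounds of part~(i).
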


 We proceed to recall the definition of boundary layers (BLs) following the convention of \cite{Frid99, Frid20}.
\begin{defin}\label{D1}
 Denote by $(u^\va, v^\va)$ and $(u^0, v^0)$ the solution of \eqref{e3}-\eqref{e4} with $\va>0$ and $\va=0$, respectively. If there exists a non-negative function $\delta=\delta(\va)$ satisfying $\delta(\va)\rightarrow 0$ as $\va \rightarrow0$ such that
\ben
\begin{aligned}
&\lim_{\va\rightarrow 0}\|u^\va-u^0\|_{L^{\infty}(0,T;C[a,b])}=0,\\
&\lim_{\va\rightarrow 0}\|v^{\va}-v^0\|_{L^{\infty}(0,T;C[a+\delta,b-\delta])}=0,\\
&\liminf_{\va\rightarrow 0}\|v^{\va}-v^0\|_{L^{\infty}(0,T;C[a,b])}>0,
\end{aligned}
\enn
we say that the initial-boundary value problem \eqref{e3}-\eqref{e4} has a boundary layer solution as $\va \to 0$ and $\delta(\varepsilon)$ is called a boundary layer thickness (BL-thickness).
\end{defin}

Our main result is as follows.
\begin{theorem}\label{t1}
 Suppose that $(u_0,v_0)\in H^2\times H^2$ with $u_0\geq 0$ satisfy the compatible conditions $u_0(a)=u_0(b)=\bar{u}$ and $v_0(a)=\bar{v}_1, v_0(b)=\bar{v}_2$. Let $(u^0,v^0)$ be the solution obtained in Theorem \ref{p1}. For any $0<T<\infty$, we denote
\ben
\va_0=\min \bigg\{\Big(8C_0\int_0^T F(t)\,dt\Big)^{-2}, \, \,
\Big(32C_0^2Te^{C_0\int_0^T F(t)\,dt}\int_0^T F(t)\,dt\Big)^{-2}
\bigg\},
\enn
where the function $F(t)$ is defined in \eqref{j14} by $\|u^0(t)\|_{H^2}$, $\|v^0(t)\|_{H^2}$ and the constant $C_0$  (given in \eqref{i5}) depends only on $a,b$ and $n$.
Then \eqref{e3}-\eqref{e4} with $\va\in (0,\va_0]$ admits a unique solution $(u^\va,v^\va)\in C([0,T];H^2\times H^2)$. Furthermore, any function $\delta=\delta(\va)$ satisfying
\be\label{j13}
\delta(\va)\rightarrow 0\,\,{\rm and}\,\,
\va^{1/2}/\delta(\va)\rightarrow 0,\, {\rm as}\,\va \rightarrow 0
\ee
is a BL-thickness of \eqref{e3}-\eqref{e4} such that
\be\label{j6}
\|u^\va-u^0\|_{L^\infty(0,T;C[a,b])}\leq C\va^{1/4},
\ee
\be\label{j7}
\|v^\va-v^0\|_{L^\infty(0,T;C[a+\delta,b-\delta])}\leq C\va^{1/4}\delta^{-1/2}.
\ee
Moreover,
\be\label{j8}
\liminf_{\va\rightarrow 0} \|v^\va-v^0\|_{L^\infty(0,T;C[a,b])}>0
\ee
if and only if
\be\label{j9}
\int_{0}^t u^{0}_r(a,\tau)\,d\tau\neq 0\qquad
{\rm or}\quad\int_{0}^t u^{0}_r(b,\tau)\,d\tau\neq 0,\quad
{\rm for\,\, some}\,\, t\in [0,T].
\ee
\end{theorem}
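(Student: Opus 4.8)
The argument proceeds in four steps: (A) extending the local $H^2\times H^2$ solution $(u^\va,v^\va)$ to $[0,T]$ for $\va$ small; (B) a coupled energy estimate for the differences $\phi:=u^\va-u^0$, $\psi:=v^\va-v^0$ that copes with the boundary mismatch of $v$; (C) upgrading the $L^2$ bounds to the pointwise-in-$r$ rates \eqref{j6}--\eqref{j7}; and (D) the dichotomy \eqref{j8}$\Leftrightarrow$\eqref{j9}. For (A) I would run a continuation argument: testing the $u$-equation against $u^\va-\bar u$ and $-\frac1{r^{n-1}}(r^{n-1}(u^\va-\bar u)_r)_r$, and the $v$-equation against $v^\va$ (and, at the level of $\va$-weighted norms, against $-\frac1{r^{n-1}}(r^{n-1}v^\va_r)_r$), using that $u^\va$ stays nonnegative and $L^\infty$-bounded and absorbing the terms $\va\frac{n-1}{r^2}v$, $\va(v^2)_r$ into the $\va$-diffusion, gives $\|u^\va\|_{L^\infty(0,T;H^2)}+\|v^\va\|_{L^\infty(0,T;L^2)}+\int_0^T\big(\|u^\va\|_{H^3}^2+\va\|v^\va\|_{H^2}^2\big)d\tau\le C$ with $C$ independent of $\va$; closing this bound is precisely what forces $\va\le\va_0$, with $\va_0$ the stated expression built from $C_0=C_0(a,b,n)$ and $\int_0^TF$.

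For (B), since $u^\va|_{r=a,b}=u^0|_{r=a,b}=\bar u$, the difference $\phi$ vanishes on the boundary and its energy estimate is clean: testing the $\phi$-equation against $r^{n-1}\phi$, and using the uniform $L^\infty$ bounds for $u^\va,u^0$, gives $\frac{d}{dt}\|r^{(n-1)/2}\phi\|^2+\|r^{(n-1)/2}\phi_r\|^2\le C(\|\phi\|^2+\|\psi\|^2)$. The real obstacle is $\psi$: from \eqref{e4} and $v^0_t=u^0_r$ one gets $\psi|_{r=a}=\bar v_1-v^0(a,t)=-\int_0^t u^0_r(a,\tau)\,d\tau=:\beta_1(t)$ and likewise $\psi|_{r=b}=:\beta_2(t)$, generically nonzero, so the naive energy identity for $\psi$ produces the uncontrolled flux $\va[r^{n-1}v^\va_r\psi]_{r=a}^{r=b}$. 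I would remove it by subtracting a boundary-layer corrector
\[
v^b(r,t):=\beta_1(t)\,\zeta_a(r)\,e^{-(r-a)/\sqrt\va}+\beta_2(t)\,\zeta_b(r)\,e^{-(b-r)/\sqrt\va},
\]
with fixed cutoffs $\zeta_a,\zeta_b$, so that $\tilde\psi:=\psi-v^b$ vanishes at $r=a,b$. Since $v^b$ is $O(1)$ only on a layer of width $\sqrt\va$, $\|v^b\|_{L^2}+\|v^b_t\|_{L^2}\le C\va^{1/4}$; since $e^{\mp(r\mp a)/\sqrt\va}$ is an eigenfunction of $\va\,\partial_r^2$ with eigenvalue $1$ (up to cutoff commutators, which are exponentially small), the residual generated by inserting $v^0+v^b$ into the $v$-equation is $\le C\va^{1/4}$ in $L^2$, the remaining pieces ($\va\Delta_r v^0$, $\va\frac{n-1}{r^2}v^\va$, $\va(v^2)_r$, using the bounds of (A) and Theorem~\ref{p1}) being of order $\va^{3/4}$ or smaller in the time-integrated sense. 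Testing the $\tilde\psi$-equation against $r^{n-1}\tilde\psi$ then produces no boundary term; adding to the $\phi$-inequality and applying Grönwall (here $\va\le\va_0$ enters once more) yields $\|\phi(t)\|_{L^2}^2+\|\tilde\psi(t)\|_{L^2}^2+\int_0^t\|\phi_r\|_{L^2}^2\,d\tau\le C\va^{1/2}$ on $[0,T]$, hence $\|v^\va-v^0\|_{L^\infty(0,T;L^2)}\le\|\tilde\psi\|_{L^\infty(0,T;L^2)}+\|v^b\|_{L^\infty(0,T;L^2)}\le C\va^{1/4}$. This step is the main obstacle: constructing a corrector whose residual, together with \emph{all} genuinely $\va$-dependent error terms — including the radial-geometry terms $\va\frac{n-1}{r^2}v$ and $\va(v^2)_r$ absent from the one-dimensional model of \cite{HWZ} — is small enough in the right norms to close the Grönwall loop.

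For (C): $\phi$ vanishes at $r=a,b$ and $\|\phi_r\|_{L^\infty(0,T;L^2)}\le C$ by (A), so Agmon's inequality with the $L^2$-bound yields \eqref{j6} (to reach the exponent $\tfrac14$ one first improves $\|\phi\|_{L^2}$ to $O(\va^{1/2})$ by integrating by parts once more in the $u^0\psi$-coupling, where the singular factor $v^b_r$ is paired against the vanishing factor $\phi$). For $v$: on $[a+\delta,b-\delta]$ the corrector satisfies $v^b=O(e^{-c\delta/\sqrt\va})$, which by \eqref{j13} is $o(\va^N)$ for every $N$, so $\psi$ and $\tilde\psi$ agree there up to a negligible error; a localized (equivalently, degenerate-weighted with weight $(r-a)(b-r)$, which excises exactly the layer contribution) higher-order estimate gives $\|\psi_r(t)\|_{L^2(a+\delta/2,\,b-\delta/2)}\le C\va^{1/4}\delta^{-1/2}$; inserting this together with $\|\psi\|_{L^\infty(0,T;L^2)}\le C\va^{1/4}$ into the elementary bound $|\psi(x,t)|^2\le \frac{C}{\delta}\|\psi(t)\|_{L^2(a,b)}^2+C\|\psi(t)\|_{L^2(a,b)}\|\psi_r(t)\|_{L^2(a+\delta/2,\,b-\delta/2)}$, valid for $x\in[a+\delta,b-\delta]$, produces \eqref{j7}. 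Combined with $\delta(\va)\to0$ this shows every such $\delta$ is a BL-thickness in the sense of Definition~\ref{D1}.

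For (D): if \eqref{j9} holds, say $\int_0^{t_*}u^0_r(a,\tau)\,d\tau\neq0$ for some $t_*\le T$, then (using $v_0(a)=\bar v_1$ and $v^0_t=u^0_r$) $v^0(a,t_*)=\bar v_1+\int_0^{t_*}u^0_r(a,\tau)\,d\tau\neq\bar v_1=v^\va(a,t_*)$ for \emph{every} $\va>0$, so $\|v^\va-v^0\|_{L^\infty(0,T;C[a,b])}\ge\big|\int_0^{t_*}u^0_r(a,\tau)\,d\tau\big|>0$ uniformly in $\va$, giving \eqref{j8}; the endpoint $r=b$ is identical. Conversely, if \eqref{j9} fails then $\beta_1\equiv\beta_2\equiv0$, so $v^b\equiv0$, no layer is created, $\psi=\tilde\psi$ vanishes at $r=a,b$, and the $L^2$-bound of (B) together with a uniform (now unweighted) $H^1$-bound for $\psi$ forces $\|v^\va-v^0\|_{L^\infty(0,T;C[a,b])}\to0$, so \eqref{j8} fails. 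This proves the equivalence and completes the proof.
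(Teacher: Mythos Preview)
Your route differs from the paper's in two linked ways, and one of them is a genuine gap.

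The paper does \emph{not} use a boundary-layer corrector, and it never establishes uniform-in-$\va$ bounds on $(u^\va,v^\va)$. It works directly with the differences $(h,w)=(u^\va-u^0,v^\va-v^0)$: testing the $w$-equation against both $r^{n-1}w$ and $r^{n-1}w_t$ (so the energy is $E=\|r^{(n-1)/2}h\|^2+\|r^{(n-1)/2}w\|^2+\va\|r^{(n-1)/2}w_r\|^2$), it keeps the boundary fluxes $\va[r^{n-1}w_rw]_a^b$, $\va[r^{n-1}w_rw_t]_a^b$ and bounds them by Gagliardo--Nirenberg, paying $\eta\va^2\|w_{rr}\|^2$ which is recovered from the equation itself. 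This yields the \emph{nonlinear} inequality $\tfrac{d}{dt}E\le C_0\va^{1/2}F+C_0FE+C_0(E^2+E^3)$, closed by a purpose-built ODE lemma (a Gr\"onwall variant for $y'\le\gamma f_1+f_2y+C_0(y^2+\cdots+y^k)$) that produces the threshold $\va_0$ and the bound $E\le C\va^{1/2}$ in one stroke. Your weighted $(r-a)(b-r)$ estimate in~(C) and your Step~(D) match the paper; for \eqref{j6}, however, the paper gets $\|h_r\|_{L^\infty(0,T;L^2)}^2\le C\va^{1/2}$ directly (testing the $h$-equation with $r^{n-1}h_t$, then its $t$-derivative with $r^{n-1}h_t$), so $\|h\|_{H^1}\le C\va^{1/4}$ without any extra ``improvement'' step.

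The gap is Step~(A). The paper says explicitly (end of Section~2) that for $n\ge2$ the system with $\va>0$ ``lacks an energy-like structure or a Lyapunov function to provide a preliminary estimate uniformly in $\va$,'' and the entire strategy is designed to \emph{bypass} such bounds. Your sketch of~(A) does not address the obstructions: testing the $v$-equation against $v^\va$ leaves the flux $\va[r^{n-1}v^\va_r v^\va]_a^b$ (since $v^\va|_{r=a,b}=\bar v_1,\bar v_2$ need not vanish), and the cubic piece $\va\!\int r^{n-1}(v^\va)^2v^\va_r$ cannot be absorbed into $\va\|v^\va_r\|^2$ without an $L^\infty$ bound on $v^\va$ you do not yet have. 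Since Step~(B) then invokes the bounds of~(A) to control $\va((v^\va)^2)_r$ and $\va\tfrac{n-1}{r^2}v^\va$ in the residual, the argument is circular as written. Your corrector idea is a legitimate alternative and would yield the same $\va^{1/4}$ rate (indeed $\|v^b\|_{L^2}\sim\va^{1/4}$), but to make it work you must close the $\tilde\psi$-estimate self-containedly --- expand $(v^\va)^2=(v^0+v^b+\tilde\psi)^2$ and handle the $\tilde\psi$-nonlinearities by a smallness/continuation argument --- which is, in corrector language, exactly the nonlinear-ODE step the paper performs directly on $(h,w)$.
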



 By employing transformation \eqref{e2}, we next convert the above results for \eqref{e3}-\eqref{e4} to the pre-transformed chemotaxis model \eqref{e1}. The counterpart of the original model reads as follows:
\be\label{e6}
\left\{\begin{array}{lll}
u_t= \frac{1}{r^{n-1}}(r^{n-1}u_{r})_{r}
-\frac{1}{r^{n-1}}\left(r^{n-1}u\frac{c_r}{c}\right)_r,\\
c_t=\va\frac{1}{r^{n-1}}(r^{n-1} c_{r})_{r}-uc,\\
u(0,r)=u_0(r),\,\,\,c(0,r)=c_0(r),\\
u|_{r=a,b}=\bar{u},\,\,\,
[c_r+\bar{v}_1c](a,t)=0,\,\,
[c_r+\bar{v}_2c](b,t)=0.
\end{array}\right.
\ee
\begin{prop}\label{p2}
  Assume $c_0> 0$ and $(u_0,\ln c_0)\in H^2\times H^3$. Suppose that the assumptions in Theorem \ref{t1} hold with $v_0=-(\ln c_0)_r$. Let $0<T<\infty$. Then \eqref{e6} with $\va\in [0,\va_0]$ admits a unique solution $(u^\va,c^\va)\in C([0,T];H^2\times H^3)$ such that
\be\label{i13}
\begin{split}
\|u^\va-u^0\|_{L^\infty(0,T;C[a,b])}\leq C\va^{1/4},\\
\|c^\va-c^0\|_{L^\infty(0,T;C[a,b])}\leq C\va^{1/4}.
\end{split}
\ee
Moreover, the gradient of $c$ has a boundary layer effect as $\va\rightarrow 0$, that is
\be\label{i11}
\|c^\va_r-c^0_r\|_{L^\infty(0,T;C[a+\delta,b-\delta])}
\leq C\va^{1/4}\delta^{-1/2},
\ee
with the function $\delta(\va)$ defined \eqref{j13}
and the following estimate holds
\be\label{i12}
\liminf_{\va\rightarrow 0} \|c^\va_r-c^0_r\|_{L^\infty(0,T;C[a,b])}>0,
\ee
if and only if \eqref{j9} is true.
\end{prop}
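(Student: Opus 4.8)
\noindent\textbf{Proof strategy for Proposition~\ref{p2}.}
The plan is to realise the Cole--Hopf transformation \eqref{e2} at the level of solutions and then transport the estimates of Theorems~\ref{p1}--\ref{t1} through the algebraic identities it produces. Let $(u^\va,v^\va)$ be the solution of \eqref{e3}--\eqref{e4} from Theorem~\ref{t1} (or Theorem~\ref{p1} when $\va=0$), and set $\mc{L}w:=w^2-w_r-\frac{n-1}{r}w$. I would first \emph{define}
\[
c^\va(r,t):=c_0(r)\,\exp\Big(\int_0^t\big[\va\,\mc{L}v^\va-u^\va\big](r,\tau)\,d\tau\Big),
\]
and check that $(u^\va,c^\va)$ solves \eqref{e6}. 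Differentiating the exponent in $r$ and using the $v$-equation of \eqref{e3} rewritten as $v^\va_t=u^\va_r-\va(\mc{L}v^\va)_r$ gives the Cole--Hopf identity $c^\va_r=-v^\va c^\va$; substituting it back produces $c^\va_t=\va\frac{1}{r^{n-1}}(r^{n-1}c^\va_r)_r-u^\va c^\va$ and turns the $u$-equation of \eqref{e6} into that of \eqref{e3}; the boundary values $v^\va(a,t)=\bar v_1,\ v^\va(b,t)=\bar v_2$ convert $c^\va_r=-v^\va c^\va$ into the Robin conditions in \eqref{e6}; and $c^\va(\cdot,0)=c_0$. Since $\ln c_0\in H^3\hookrightarrow C[a,b]$, the datum $c_0$ lies between two positive constants, and the exponent is bounded (above using $u^\va\ge0$; below using the uniform bounds $\|v^\va\|_{L^\infty((a,b)\times(0,T))}\le C$ and $\int_0^T\va\|v^\va_r(t)\|_{L^\infty}\,dt\le C\va^{1/2}$ furnished by the boundary-layer analysis underlying Theorem~\ref{t1}), so $c^\va$ is bounded above and below by positive constants uniformly in $\va$. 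The regularity $c^\va\in C([0,T];H^3)$ then follows from this representation together with the $L^2(0,T;H^3)$-type estimates on $u^\va,v^\va$ in the proofs of Theorems~\ref{p1}--\ref{t1} (equivalently, for $\va>0$, from parabolic regularity for the linear equation satisfied by $c^\va$). Uniqueness follows by running the construction backwards: a solution $(\tilde u,\tilde c)$ of \eqref{e6} has $\tilde c>0$ by the maximum principle, $(\tilde u,-\tilde c_r/\tilde c)$ solves \eqref{e3}--\eqref{e4} hence equals $(u^\va,v^\va)$ by Theorem~\ref{t1}, and $\tilde c$ is then determined by the uniquely solvable linear $c$-equation.

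For \eqref{i13}, the bound on $u^\va-u^0$ is exactly \eqref{j6}. For $c^\va-c^0$ I would use $\psi^\va:=\ln c^\va-\ln c^0$, which by the construction satisfies $\psi^\va_t=\va\,\mc{L}v^\va-(u^\va-u^0)$ with $\psi^\va|_{t=0}=0$; integrating in $t$ and taking $L^\infty(a,b)$ norms, the $u$-term contributes $\le C\va^{1/4}$ by \eqref{j6} and the $\va$-term $\le C\va^{1/2}$ by the uniform bounds above, so $\|\psi^\va\|_{L^\infty(0,T;C[a,b])}\le C\va^{1/4}$. Since $c^\va-c^0=c^0(e^{\psi^\va}-1)$ with $c^0$ bounded and $|\psi^\va|\le1$ for $\va$ small, this gives the second inequality in \eqref{i13}.

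For the boundary-layer claims I would exploit $c^\va_r-c^0_r=-(v^\va-v^0)c^\va-v^0(c^\va-c^0)$. On $[a+\delta,b-\delta]$ the first term is $\le\|c^\va\|_{L^\infty}\|v^\va-v^0\|_{L^\infty(0,T;C[a+\delta,b-\delta])}\le C\va^{1/4}\delta^{-1/2}$ by \eqref{j7} and the second is $\le\|v^0\|_{L^\infty(0,T;C[a,b])}\|c^\va-c^0\|_{L^\infty(0,T;C[a,b])}\le C\va^{1/4}$ by the previous step; this is \eqref{i11}, and shows every $\delta(\va)$ satisfying \eqref{j13} is a BL-thickness. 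With $c_*:=\inf_{0<\va\le\va_0}\inf_{(a,b)\times(0,T)}c^\va>0$ the same identity gives
\[
c_*\|v^\va-v^0\|_{L^\infty(0,T;C[a,b])}-C\va^{1/4}\ \le\ \|c^\va_r-c^0_r\|_{L^\infty(0,T;C[a,b])}\ \le\ C\|v^\va-v^0\|_{L^\infty(0,T;C[a,b])}+C\va^{1/4},
\]
whence $\liminf_{\va\to0}\|c^\va_r-c^0_r\|_{L^\infty(0,T;C[a,b])}>0$ holds if and only if $\liminf_{\va\to0}\|v^\va-v^0\|_{L^\infty(0,T;C[a,b])}>0$, which by Theorem~\ref{t1} is equivalent to \eqref{j9}. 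This yields \eqref{i12}.

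The delicate point is the first step: proving the $H^3$-regularity of $c^\va$ and, above all, extracting from the proof of Theorem~\ref{t1} the uniform-in-$\va$ control of $\|v^\va\|_{L^\infty}$ and of $\int_0^T\va\|v^\va_r(t)\|_{L^\infty}\,dt$ (morally $O(1)$ and $O(\va^{1/2})$, although these govern the quantities that degenerate inside the layer) that drives the $\psi^\va$-estimate. If only the $L^\infty$-rate \eqref{j6} were available one should instead derive $\|c^\va-c^0\|_{L^2}\le C\va^{1/4}$ by a direct energy estimate on $c^\va-c^0$ (treating the Robin boundary terms via a trace inequality) and interpolate with the uniform $H^1$-bound coming from $c^\va_r=-v^\va c^\va$; matching the exponent $\tfrac14$ in the sup norm then requires in addition the $\va^{1/2}$ $L^2$-rate for $u^\va-u^0$, which should be contained in the proof of Theorem~\ref{t1}. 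Once these uniform bounds are secured, the remaining steps are routine transfers through $c^\va_r=-v^\va c^\va$ and the exponential representation of $c^\va$.
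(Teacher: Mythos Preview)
Your proposal is correct and follows essentially the same route as the paper: the exponential representation of $c^\va$, the estimate of $\psi^\va=G^\va=\ln c^\va-\ln c^0$ via \eqref{j6} and the uniform-in-$\va$ control of $\va\|v^\va_r\|_{L^\infty}$ (which in the paper comes from the bound $\|w_{rr}\|_{L^2(0,T;L^2)}\le C\va^{-3/4}$ in Lemma~\ref{l3}), and the identity $c^\va_r-c^0_r=-(v^\va-v^0)c^\va-v^0(c^\va-c^0)$ together with the two-sided bounds on $c^\va$ to pass between \eqref{i12} and \eqref{j8}. You have also correctly isolated the delicate point---the $\va v^\va_r$ control needed for the $\psi^\va$-estimate is not part of the statement of Theorem~\ref{t1} but must be extracted from its proof.
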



At the end of this section, we briefly introduce the main ideas used in the paper. Although the system \eqref{e3}-\eqref{e4} with $n\geq 2$ is in a similar form to its counterpart with $n=1$ for which the vanishing diffusion limit has been studied in \cite{HWZ} based on a $\va$-independent estimate for solutions with $\va>0$, the methods used there can not be applied to study the present problem since when $n\geq 2$ the system \eqref{e3}-\eqref{e4} with $\va>0$ lacks an energy-like structure or a Lyapunov function to provide a preliminary estimate uniformly in $\va$. Moreover, one can not use the estimates derived in \cite{winkler2017radial} for the present problem either since those estimates depend on $\va$. The difficulty in our analysis consists in deriving the $\va$-convergence estimates in \eqref{j6} and \eqref{j7} without any uniform-in-$\va$ priori bounds on solutions $(u^{\va},v^{\va})$. Inspired by the works \cite{constantin1986note,wuxu2014}, this will be achieved in section 4 by regarding $(u^\va,v^\va)$ with small $\va>0$ as a perturbation of $(u^0,v^0)$ and then estimating their difference $(u^\va-u^0,v^\va-v^0)$ by the method of energy estimates and a new Gronwall's type inequality (see Lemma \ref{l0}) on ODEs. The proof of Theorem \ref{p1} is standard and will be given in section 3.

\section{Proof of Theorem \ref{p1}}
  This section is to prove Theorem \ref{p1} based on the following lemmas where the \emph{a priori} estimates on solution $(u^0,v^0)$ of \eqref{e3}-\eqref{e4} with $\va=0$ are derived by the energy method. We set off by rewriting \eqref{e3}-\eqref{e4} with $\va=0$ as follows:
\begin{eqnarray}\label{e7}
\left\{\begin{array}{lll}
u^0_t= \frac{1}{r^{n-1}}\big(r^{n-1}u^0_{r}\big)_r
+\frac{1}{r^{n-1}}
\big(r^{n-1}u^0v^0\big)_r,\\
v^0_t=u^0_r,\\
(u^0,v^0)(r,0)=(u_0,v_0)(r),\\
u^0(a,t)=u^0(b,t)=\bar{u}.
\end{array}\right.
\end{eqnarray}

\begin{lemma}\label{l01}
Suppose the assumptions in Theorem \ref{p1} hold and $\bar{u}>0$. Then there exists a positive constant $C_0$ independent of $t$ such that
\be\label{k1}
\begin{split}
\int_a^b& r^{n-1}[(u^0\ln u^0-u^0)(t)-(\bar{u}\ln \bar{u}-\bar{u})
-\ln \bar{u}(u^0(t)-\bar{u})]dr\\
+&\frac{1}{2}\int_a^b r^{n-1}(v^0)^2(t)dr+\int_0^t\int_a^b r^{n-1}\frac{(u^0_r)^2}{u^0}drd\tau\leq C_0
\end{split}
\ee
and
\be\label{k2}
\|r^{(n-1)/2}[u^0(t)-\bar{u}]\|^2
+\int_0^t\|r^{(n-1)/2}u^0_r(\tau)\|^2d\tau \leq C_0.
\ee
\end{lemma}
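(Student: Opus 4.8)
The plan is to derive the two estimates \eqref{k1} and \eqref{k2} via the standard energy method applied to system \eqref{e7}, exploiting the entropy-type structure of the first equation together with the simple transport structure $v^0_t=u^0_r$ of the second. The key observation is that the convex function $\phi(s):=s\ln s-s$ (with $\phi''(s)=1/s$) generates a natural Lyapunov functional for \eqref{e7}, since the relative entropy $\phi(u^0)-\phi(\bar u)-\phi'(\bar u)(u^0-\bar u)$ vanishes to second order at $u^0=\bar u$ and is controlled near the boundary by the Dirichlet condition $u^0|_{r=a,b}=\bar u$.

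First I would multiply the $u^0$-equation by $r^{n-1}[\ln u^0-\ln\bar u]$ and integrate over $(a,b)$. The diffusion term $\frac{1}{r^{n-1}}(r^{n-1}u^0_r)_r$ produces, after integration by parts (the boundary terms vanish because $\ln u^0-\ln\bar u=0$ at $r=a,b$), the good dissipation term $\int_a^b r^{n-1}(u^0_r)^2/u^0\,dr$. The chemotaxis term $\frac{1}{r^{n-1}}(r^{n-1}u^0v^0)_r$ integrated against $r^{n-1}[\ln u^0-\ln\bar u]$ yields, again by parts, $-\int_a^b r^{n-1}v^0 u^0_r\,dr$ (the boundary term again vanishes). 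The time-derivative term gives $\frac{d}{dt}\int_a^b r^{n-1}[\phi(u^0)-\phi(\bar u)-\phi'(\bar u)(u^0-\bar u)]\,dr$, using $\bar u$ constant in $t$. Next I would multiply the $v^0$-equation $v^0_t=u^0_r$ by $r^{n-1}v^0$ and integrate, getting $\frac{1}{2}\frac{d}{dt}\int_a^b r^{n-1}(v^0)^2\,dr=\int_a^b r^{n-1}v^0 u^0_r\,dr$. Adding the two identities, the cross terms $\mp\int_a^b r^{n-1}v^0u^0_r\,dr$ cancel exactly, leaving
$$
\frac{d}{dt}\Big(\int_a^b r^{n-1}[\phi(u^0)-\phi(\bar u)-\phi'(\bar u)(u^0-\bar u)]\,dr+\tfrac12\int_a^b r^{n-1}(v^0)^2\,dr\Big)+\int_a^b r^{n-1}\frac{(u^0_r)^2}{u^0}\,dr=0.
$$
Integrating in time from $0$ to $t$ and bounding the initial data by $H^2\hookrightarrow L^\infty$ (so $u_0$, $v_0$ and hence the initial entropy are controlled by a constant $C_0$ depending only on $a,b,n$ and the norms of the data) yields \eqref{k1}. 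For \eqref{k2}, I would use the elementary convexity inequality $\phi(s)-\phi(\bar u)-\phi'(\bar u)(s-\bar u)\geq c_0(s-\bar u)^2$ valid for $s$ in a bounded range (and for all $s\geq 0$ one still has a coercive lower bound of the form $c_0(s-\bar u)^2/(1+s)$, which suffices after noting $u^0$ stays bounded, or more simply a direct Taylor argument); combined with the time-integrated dissipation this controls $\|r^{(n-1)/2}[u^0-\bar u]\|^2$, while $\int_0^t\|r^{(n-1)/2}u^0_r\|^2\,d\tau$ follows from $\int_0^t\int_a^b r^{n-1}(u^0_r)^2/u^0\,dr\,d\tau\leq C_0$ together with an $L^\infty$ bound on $u^0$.

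The main obstacle I anticipate is twofold: first, justifying that $u^0>0$ is bounded away from zero (needed to make $\ln u^0$ and the dissipation term meaningful and to convert $\int r^{n-1}(u^0_r)^2/u^0$ into $\int r^{n-1}(u^0_r)^2$ in \eqref{k2}); since $\bar u>0$ this should follow from a maximum-principle/comparison argument on the $u^0$-equation, but it must be invoked carefully because the drift coefficient $v^0$ is itself only as regular as the solution. Second, obtaining the lower bound for the relative entropy that is coercive in $(u^0-\bar u)^2$ uniformly requires knowing $u^0$ lies in a fixed bounded interval, which circularly depends on an $L^\infty$ bound; the clean way around this is to first establish an $L^\infty$ bound on $u^0$ (e.g. via the maximum principle, or from the subsequent higher-order estimates bootstrapped in the later lemmas) and then close \eqref{k2}. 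Everything else is routine integration by parts and Gronwall-free manipulation, since the structure of \eqref{e7} makes the leading cross terms cancel identically.
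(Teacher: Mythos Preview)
Your derivation of \eqref{k1} is correct and matches the paper's argument exactly: multiply the two equations by $r^{n-1}(\ln u^0-\ln\bar u)$ and $r^{n-1}v^0$, integrate by parts, observe that the cross terms cancel, and integrate in time.

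For \eqref{k2}, however, the obstacle you flag is a genuine gap, not a technicality. The relative entropy $s\ln(s/\bar u)-(s-\bar u)$ grows only like $s\ln s$ for large $s$, so it does \emph{not} dominate $(s-\bar u)^2$ without an a priori upper bound on $u^0$; likewise, converting $\int_0^t\!\!\int r^{n-1}(u^0_r)^2/u^0$ into $\int_0^t\!\!\int r^{n-1}(u^0_r)^2$ needs $\sup u^0<\infty$. Neither of your proposed fixes works: the maximum principle gives $u^0\ge 0$ but no upper bound (the zeroth-order coefficient $v^0_r+\tfrac{n-1}{r}v^0$ has no sign), and bootstrapping from Lemmas~\ref{l02}--\ref{l03} is circular since those lemmas use \eqref{k2}.

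The paper avoids this entirely by a \emph{second, independent} $L^2$ estimate. Set $\tilde u=u^0-\bar u$, multiply the $\tilde u$-equation by $r^{n-1}\tilde u$ and the $v^0$-equation by $\bar u\,r^{n-1}v^0$, and add. The linear drift term $\frac{\bar u}{r^{n-1}}(r^{n-1}v^0)_r$ cancels against $\bar u\,u^0_r$, leaving only the cubic remainder $-\int_a^b r^{n-1}\tilde u\,v^0\,\tilde u_r\,dr$, which is bounded by $\tfrac12\|r^{(n-1)/2}\tilde u_r\|^2+\tfrac12\|\tilde u\|_{L^\infty}^2\|r^{(n-1)/2}v^0\|^2$. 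The key trick is then the Cauchy--Schwarz splitting
\[
|\tilde u(r,t)|=\Big|\int_a^r u^0_r\,dr\Big|\le \Big(\int_a^b u^0\,dr\Big)^{1/2}\Big(\int_a^b \frac{(u^0_r)^2}{u^0}\,dr\Big)^{1/2},
\]
which feeds the entropy dissipation from \eqref{k1} directly into the estimate. After integrating in time, $\int_0^t\!\!\int (u^0_r)^2/u^0$, $\sup_t\|u^0\|_{L^1}$, and $\sup_t\|r^{(n-1)/2}v^0\|^2$ are all controlled by \eqref{k1}, so \eqref{k2} follows with no $L^\infty$ bound on $u^0$ ever needed.
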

\begin{proof}
Taking the $L^2$ inner products of the first and second equation of \eqref{e7} with $r^{n-1}(\ln u^0-\ln\bar{u})$ and $r^{n-1}v^0$ respectively, we then add the results and use integration by parts to get
\ben
\begin{split}
\frac{d}{dt}&\int_a^b r^{n-1}[(u^0\ln u^0-u^0)-(\bar{u}\ln \bar{u}-\bar{u})
-\ln \bar{u}(u^0-\bar{u})]dr\\
+&\frac{1}{2}\frac{d}{dt}\int_a^b r^{n-1}(v^0)^2dr
+\int_a^b r^{n-1}\frac{(u^0_r)^2}{u^0}dr
=0,
\end{split}
\enn
which gives rise to \eqref{k1} upon integration over $(0,t)$. To prove \eqref{k2}, we denote $\ti{u}(r,t)=u^0(r,t)-\bar{u}$ and find from \eqref{e7} that $(\ti{u},v^0)(r,t)$ satisfies
\begin{eqnarray}\label{e8}
\left\{\begin{array}{lll}
\ti{u}_t=\frac{1}{r^{n-1}}(r^{n-1}\ti{u}_r)_r
+\frac{1}{r^{n-1}}(r^{n-1}\ti{u}v^0)_r
+\frac{\bar{u}}{r^{n-1}}(r^{n-1}v^0)_r,\\
v^0_t=\tilde{u}_r,\\
(\tilde{u},v^0)(r,0)=(u_0-\bar{u},v_0)(r),\\
\tilde{u}(a,t)=\tilde{u}(b,t)=0.
\end{array}\right.
\end{eqnarray}
Multiplying the first and second equation of \eqref{e8} by $r^{n-1}\ti{u}$ and $\bar{u}r^{n-1}v^0$, respectively. Adding the results gives
\be\label{k4}
\begin{split}
\frac{1}{2}&\frac{d}{dt}\Big(\|r^{(n-1)/2}\ti{u}\|^2
+\bar{u}\|r^{(n-1)/2}v^0\|^2\Big)+\|r^{(n-1)/2}\ti{u}_r\|^2\\
=&-\int_a^b r^{n-1}\ti{u}v^0\ti{u}_r dr\\
\leq & \frac{1}{2}\|r^{(n-1)/2}\ti{u}_r\|^2
+\frac{1}{2}\|\ti{u}\|_{L^\infty}^2\|r^{(n-1)/2}v^0\|^2.
\end{split}
\ee
Note that $\|\ti{u}\|_{L^\infty}$ can be estimated as follows
\ben
|\ti{u}(r,t)|=|u^0(r,t)-\bar{u}|
=\left|\int_a^r u^0_rdr\right|
\leq \left(\int_a^b u^0dr\right)^{1/2}
\left(\int_a^b \frac{(u^0_r)^2}{u^0}dr\right)^{1/2}.
\enn
Then substituting the above estimate into \eqref{k4} and integrating the result over $(0,t)$ we have
\ben
\begin{split}
\frac{1}{2}&\|r^{(n-1)/2}\ti{u}(t)\|^2
+\frac{1}{2}\bar{u}\|r^{(n-1)/2}v^0(t)\|^2
+\frac{1}{2}\int_0^t\|r^{(n-1)/2}\ti{u}_r\|^2\,d\tau\\
\leq& \frac{1}{2}\int_0^t\int_a^b\frac{(u^0_r)^2}{u^0}\,drd\tau\cdot
\|u^0\|_{L^\infty(0,t;L^1)}
\|r^{(n-1)/2}v^0\|^2_{L^\infty(0,t;L^2)},
\end{split}
\enn
which, along with \eqref{k1} and the fact
\ben
\|u^0\|_{L^\infty(0,t;L^1)}\leq C_0
\sup_{\tau\in[0,t]}\left\{\int_a^b r^{n-1}[(u^0\ln u^0-u^0)(\tau)-(\bar{u}\ln \bar{u}-\bar{u})
-\ln \bar{u}(u^0(\tau)-\bar{u})]dr\right\}
\enn
implies \eqref{k2}. The proof is completed.

\end{proof}
We proceed to derive higher regularity properties for the solution $(\tilde{u},v^0)$ of \eqref{e8}.
\begin{lemma}\label{l02}
 Suppose the assumptions in Theorem \ref{p1} hold and $\bar{u}>0$. Let $(\tilde{u},v^0)(r,t)$ be the solution of \eqref{e8}. Then there is a constant $C_0$ independent of $t$ such that
\be\label{k5}
\|(r^{n-1}v^0)_r(t)\|^2+\|r^{(n-1)/2}\ti{u}_r(t)\|^2
+\int_0^t\big(\bar{u}\|(r^{n-1}v^0)_r\|^2
+\|r^{(n-1)/2}\ti{u}_t\|^2\big)d\tau \leq C_0.
\ee
\end{lemma}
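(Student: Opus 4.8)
The plan is to establish \eqref{k5} by the energy method, combining three estimates for \eqref{e8}; the organizing structural fact is that $v^0_t=\ti u_r$ forces $(r^{n-1}\ti u_r)_r=\partial_t(r^{n-1}v^0)_r$. The first, and key, estimate comes from multiplying the $\ti u$-equation by $-(r^{n-1}\ti u_r)_r$ and integrating over $(a,b)$: the diffusion term yields an $H^2$-type dissipation $\sim\|(r^{n-1}\ti u_r)_r\|^2$ (the boundary terms vanishing since $\ti u_t|_{r=a,b}=\partial_t\ti u|_{r=a,b}=0$), the forcing term $-\bar u\int_a^b r^{-(n-1)}(r^{n-1}v^0)_r(r^{n-1}\ti u_r)_r\,dr$ equals, via the identity above, $-\tfrac{\bar u}{2}\tfrac{d}{dt}\int_a^b r^{-(n-1)}[(r^{n-1}v^0)_r]^2\,dr$, and the nonlinear term is controlled by Young's inequality against the dissipation; this gives $\tfrac{d}{dt}\mathcal E_1+c\|(r^{n-1}\ti u_r)_r\|^2\le C\|(r^{n-1}\ti u v^0)_r\|^2$ with $\mathcal E_1$ comparable to $\|r^{(n-1)/2}\ti u_r\|^2+\bar u\|(r^{n-1}v^0)_r\|^2$, and since $\bar u>0$ this already provides the first two quantities in \eqref{k5}. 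The second estimate comes from multiplying the $\ti u$-equation by $r^{n-1}\ti u_t$: the diffusion term gives $-\tfrac12\tfrac{d}{dt}\|r^{(n-1)/2}\ti u_r\|^2$, and the forcing term is integrated by parts in $r$ and then in $t$ (using $\ti u_{rt}=\partial_t v^0_t=v^0_{tt}$ and $v^0_t=\ti u_r$) to produce the dissipation $\|r^{(n-1)/2}\ti u_t\|^2$ at the cost of a cross term $\bar u\tfrac{d}{dt}\int r^{n-1}v^0\ti u_r\,dr$ and a term $\bar u\|r^{(n-1)/2}\ti u_r\|^2$ on the right. The third estimate is obtained by testing $\partial_t(r^{n-1}v^0)_r=(r^{n-1}\ti u_r)_r$ against $(r^{n-1}v^0)_r$ and eliminating $(r^{n-1}\ti u_r)_r$ through the $\ti u$-equation, which, after Young's inequality, gives $\tfrac12\tfrac{d}{dt}\|(r^{n-1}v^0)_r\|^2+\tfrac{\bar u}{2}\|(r^{n-1}v^0)_r\|^2\le\tfrac{C}{\bar u}\|r^{(n-1)/2}\ti u_t\|^2+\tfrac{C}{\bar u}\|(r^{n-1}\ti u v^0)_r\|^2$ and supplies the remaining dissipation $\bar u\int_0^t\|(r^{n-1}v^0)_r\|^2\,d\tau$.

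Next I would form a combination $\mu\times(\text{first})+\nu\times(\text{second})+(\text{third})$ with $1\ll\nu\ll\mu$ fixed: $\nu$ large enough that the $\tfrac{C}{\bar u}\|r^{(n-1)/2}\ti u_t\|^2$ of the third estimate is absorbed into $\nu\|r^{(n-1)/2}\ti u_t\|^2$, then $\mu$ large enough that the $\|(r^{n-1}\ti u_r)_r\|$-type contributions produced by Young's inequality in the second and third estimates are absorbed into the $H^2$-dissipation of the first. Integrating in time, the cross terms are controlled using $\|r^{(n-1)/2}v^0(t)\|^2\le C_0$ and $\int_0^t\|r^{(n-1)/2}\ti u_r\|^2\,d\tau\le C_0$ from Lemma \ref{l01} (the latter also absorbs the $\nu\bar u\int_0^t\|r^{(n-1)/2}\ti u_r\|^2\,d\tau$ term), and everything reduces to bounding $\int_0^t\|(r^{n-1}\ti u v^0)_r\|^2\,d\tau$.

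The main obstacle is precisely this nonlinear term: Lemma \ref{l01} supplies only $L^2$-type a priori bounds, so there is no preliminary control of $\|v^0\|_{L^\infty}$ or even of $\|v^0_r\|$, and the estimate must be closed self-consistently. The reason for choosing the multiplier $-(r^{n-1}\ti u_r)_r$ is that its $H^2$-dissipation controls $\|\ti u_r\|_{L^\infty}$, so that $v^0$ need only be measured in $L^2$: expanding the derivative and using one-dimensional Sobolev inequalities on $(a,b)$, $\|(r^{n-1}\ti u v^0)_r\|^2\le C\|\ti u\|_{L^\infty}^2(\|v^0\|^2+\|v^0_r\|^2)+C\|v^0\|^2\|\ti u_r\|_{L^\infty}^2$, where $\|\ti u(t)\|_{L^\infty}^2\le C_0\int_a^b(u^0_r)^2/u^0\,dr$ comes from the proof of Lemma \ref{l01}, $\|v^0_r\|\le C\|(r^{n-1}v^0)_r\|+C\|v^0\|$ from $(r^{n-1}v^0)_r=r^{n-1}v^0_r+(n-1)r^{n-2}v^0$, and $\|\ti u_r\|_{L^\infty}^2\le C\|\ti u_r\|(\|\ti u_r\|+\|(r^{n-1}\ti u_r)_r\|)$. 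After Young's inequality each resulting term is either a fixed constant (using \eqref{k2} and $\int_0^\infty\!\int_a^b(u^0_r)^2/u^0\,dr\,d\tau\le C_0$ from \eqref{k1}), a small multiple of $\int_0^t\|(r^{n-1}\ti u_r)_r\|^2\,d\tau$ absorbed into the $H^2$-dissipation, or a term $C_0\int_0^t\omega(\tau)\|(r^{n-1}v^0)_r(\tau)\|^2\,d\tau$ with $\omega(\tau)=\int_a^b(u^0_r)^2/u^0\,dr$. Writing $\Phi(t)=\|r^{(n-1)/2}\ti u_r(t)\|^2+\|(r^{n-1}v^0)_r(t)\|^2+\int_0^t(\|r^{(n-1)/2}\ti u_t\|^2+\bar u\|(r^{n-1}v^0)_r\|^2)\,d\tau$, one then obtains $\Phi(t)\le C_0+C_0\int_0^t\omega(\tau)\Phi(\tau)\,d\tau$; since $\|\omega\|_{L^1(0,\infty)}\le C_0$ by \eqref{k1}, Gronwall's inequality yields $\Phi(t)\le C_0e^{C_0\|\omega\|_{L^1(0,\infty)}}\le C_0$ uniformly in $t$, which is \eqref{k5}. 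The delicate point throughout is keeping this constant $t$-independent, and that hinges on the weight $\omega$ being integrable over all of $(0,\infty)$.
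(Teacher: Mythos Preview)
Your argument is correct and closes, but it is organized quite differently from the paper's proof. The paper proceeds \emph{sequentially} with only two estimates: it first tests the identity $(r^{n-1}v^0)_{rt}=(r^{n-1}\ti u_r)_r=r^{n-1}\ti u_t-(r^{n-1}\ti u v^0)_r-\bar u(r^{n-1}v^0)_r$ against $2(r^{n-1}v^0)_r$ (this is your third estimate), but instead of controlling $\int r^{n-1}\ti u_t(r^{n-1}v^0)_r\,dr$ by Young against a not-yet-available $\|\ti u_t\|^2$, it rewrites this term as $\tfrac{d}{dt}\int r^{n-1}\ti u\,(r^{n-1}v^0)_r\,dr-\int r^{n-1}\ti u\,(r^{n-1}\ti u_r)_r\,dr$ and integrates the second piece by parts. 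This algebraic trick decouples the $(r^{n-1}v^0)_r$ estimate from $\ti u_t$ entirely; the nonlinear term is then handled by putting $r^{n-1}v^0$ in $L^\infty$ (so the Gronwall weight is $\|r^{(n-1)/2}\ti u_r\|^2$, integrable by \eqref{k2}), and Gronwall closes \eqref{k7} on its own. Only afterwards does the paper multiply by $r^{n-1}\ti u_t$ to get $\|r^{(n-1)/2}\ti u_r(t)\|^2$ and $\int_0^t\|r^{(n-1)/2}\ti u_t\|^2\,d\tau$, now using the already-known bound on $\|(r^{n-1}v^0)_r\|$.

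Your route instead introduces an additional $H^2$-level test function $-(r^{n-1}\ti u_r)_r$, couples three differential inequalities with weights $1\ll\nu\ll\mu$, puts $\ti u_r$ rather than $v^0$ in $L^\infty$ in the nonlinear term, and runs Gronwall with the Fisher-information weight $\omega=\int_a^b (u^0_r)^2/u^0\,dr$ (integrable by \eqref{k1}). This works, and as a byproduct your first estimate already yields $\int_0^t\|(r^{n-1}\ti u_r)_r\|^2\,d\tau\le C_0$, which the paper obtains only later in Lemma~\ref{l03}. The paper's approach is shorter and avoids the parameter-juggling; yours is more systematic and front-loads the $H^2$-dissipation.
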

\begin{proof}
We multiply the second equation of \eqref{e8} with $r^{n-1}$ and differentiate the resulting equation with respect to $r$. Then from the first equation of \eqref{e8} we obtain
\be\label{e9}
(r^{n-1}v^0)_{rt}=(r^{n-1}\ti{u}_r)_r
=r^{n-1}\ti{u}_t-(r^{n-1}\ti{u}v^0)_r
-\bar{u}(r^{n-1}v^0)_r.
\ee
Taking the $L^2$ inner product of $\eqref{e9}$ against $2(r^{n-1}v^0)_r$ to get
\be\label{k6}
\begin{split}
\frac{d}{dt}&\|(r^{n-1}v^0)_r\|^2+2\bar{u}\|(r^{n-1}v^0)_r\|^2\\
=&2\int_{a}^b r^{n-1}\ti{u}_t (r^{n-1}v^0)_r dr
-2\int_{a}^b (r^{n-1}\ti{u}v^0)_r(r^{n-1}v^0)_r dr\\
:=&I_1+I_2.
\end{split}
\ee
We may rewrite $I_1$ as
\ben
\begin{split}
I_1=2\frac{d}{dt}\int_a^b (r^{n-1}\ti{u})(r^{n-1}v^0)_r dr
-2\int_a^b (r^{n-1}\ti{u})(r^{n-1}v^0)_{rt} dr:=M_1+M_2,
\end{split}
\enn
where $M_1$ can be reorganized as
\ben
M_1
=\frac{d}{dt}\left(\frac{1}{2}\|(r^{n-1}v^0)_r\|^2 +2\|r^{n-1}\ti{u}\|^2
-\left\|\frac{1}{\sqrt{2}}(r^{n-1}v^0)_r
-\sqrt{2}r^{n-1}\ti{u}\right\|^2 \right)
\enn
and $M_2$ can be estimated by \eqref{e9} and the Poincar\'{e} inequality as
\ben
\begin{split}
M_2
&=-2\int_a^b (r^{n-1}\ti{u})(r^{n-1}\ti{u}_r)_{r} dr
&=2\int_a^b (r^{n-1}\ti{u})_r(r^{n-1}\ti{u}_r) dr
\leq C_0 \|r^{(n-1)/2}\ti{u}_r\|^2.
\end{split}
\enn
Hence
\ben
I_1\leq \frac{d}{dt}\left(\frac{1}{2}\|(r^{n-1}v^0)_r\|^2 +2\|r^{n-1}\ti{u}\|^2
-\left\|\frac{1}{\sqrt{2}}(r^{n-1}v^0)_r
-\sqrt{2}r^{n-1}\ti{u}\right\|^2 \right)
+C_0 \|r^{(n-1)/2}\ti{u}_r\|^2.
\enn
To estimate $I_2$, we first note that for fixed $t>0$ if $f(r,t)\in H^1$ satisfies $f|_{r=a,b}=0$ it follows that $
f(r,t)^2=2\int_a^r f f_r dr\leq 2 \|f(t)\| \|f_r(t)\|,
$ which leads to
\be\label{k12}
\|f(t)\|_{L^\infty}\leq \sqrt{2}\|f(t)\|^{1/2} \|f_r(t)\|^{1/2}\quad\,\, {\rm and}\,\,\,\,\|f(t)\|_{L^\infty}\leq C_0 \|f_r(t)\|,
\ee
thanks to the Poincar\'{e} inequality $\|f(t)\|\leq C_0 \|f_r(t)\|$.
Then we deduce from \eqref{k12} and the Sobolev embedding inequality that
\be\label{k8}
\begin{split}
I_2\leq& \frac{\bar{u}}{2} \|(r^{n-1}v^0)_r\|^2
+\frac{4}{\bar{u}}
\|\ti{u}\|_{L^\infty}^2\|(r^{n-1}v^0)_r\|^2+
\frac{4}{\bar{u}}\|\ti{u}_r\|^2
\|r^{n-1}v^0\|_{L^\infty}^2\\
\leq& \frac{\bar{u}}{2} \|(r^{n-1}v^0)_r\|^2
+C_0 \|r^{(n-1)/2}\ti{u}_r\|^2 \big(\|r^{(n-1)/2}v^0\|^2+\|(r^{n-1}v^0)_r\|^2\big).
\end{split}
\ee
Substituting the above estimates for $I_1$ and $I_2$ into \eqref{k6}, one derives
\ben
\begin{split}
\frac{d}{dt}&\left(\frac{1}{2}\|(r^{n-1}v^0)_r\|^2
+\|\frac{1}{\sqrt{2}}(r^{n-1}v^0)_r
-\sqrt{2}r^{n-1}\ti{u}\|^2 \right)
+\frac{3}{2}\bar{u}\|(r^{n-1}v^0)_r\|^2\\
\leq&
C_0 \|r^{(n-1)/2}\ti{u}_r\|^2 \|(r^{n-1}v^0)_r\|^2
+C_0 \|r^{(n-1)/2}\ti{u}_r\|^2 \big(\|r^{(n-1)/2}v^0\|^2+1\big)
+2\frac{d}{dt}\|r^{n-1}\ti{u}\|^2.
\end{split}
\enn
 Then applying Gronwall's inequality to the above result and using Lemma \ref{l01}, we conclude that
\be\label{k7}
\|(r^{n-1}v^0)_r(t)\|^2
+\bar{u}\int_0^t\|(r^{n-1}v^0)_r\|^2
d\tau \leq C_0.
\ee
We proceed to estimate $\|r^{(n-1)/2}\ti{u}_r(t)\|$ by multiplying the first equation of \eqref{e8} with $2r^{n-1}\ti{u}_t$ in $L^2$ and derive
\be\label{k25}
\begin{split}
\frac{d}{dt}\|r^{(n-1)/2}\ti{u}_r\|^2+2\|r^{(n-1)/2}\ti{u}_t\|^2
&=2\int_a^b (r^{n-1}\ti{u}v^0)_r \ti{u}_tdr+2\bar{u}\int_{a}^b (r^{n-1}v^0)_r \ti{u}_tdr\\
&:=I_3+I_4.
\end{split}
\ee
By similar arguments as deriving \eqref{k8}, we estimate $I_3$ as
\ben
\begin{split}
I_3
\leq \frac{1}{2}\|r^{(n-1)/2}\ti{u}_t\|^2
+C_0 \|r^{(n-1)/2}\ti{u}_r\|^2
(\|r^{(n-1)/2}v^0\|^2+\|(r^{n-1}v^0)_r\|^2)
\end{split}
\enn
and by the
Cauchy-Schwarz inequality, $I_4$ is estimated as
\ben
I_4\leq \frac{1}{2}\|r^{(n-1)/2}\ti{u}_t\|^2
+C_0\|(r^{n-1}v^0)_r\|^2.
\enn
Then feeding \eqref{k25} on the above estimates for $I_3$ and $I_4$, we have
 \be\label{i9}
 \begin{split}
 \frac{d}{dt}&\|r^{(n-1)/2}\ti{u}_r\|^2
 +\|r^{(n-1)/2}\ti{u}_t\|^2\\
 \leq& C_0 \|r^{(n-1)/2}\ti{u}_r\|^2
(\|r^{(n-1)/2}v^0\|^2+\|(r^{n-1}v^0)_r\|^2)
+C_0\|(r^{n-1}v^0)_r\|^2.
\end{split}
 \ee
Integrating \eqref{i9} over $(0,t)$ and using \eqref{k2} and \eqref{k7}, one arrives at
\ben
\|r^{(n-1)/2}\ti{u}_r(t)\|^2
+\int_0^t\|r^{(n-1)/2}\ti{u}_t\|^2
d\tau \leq C_0,
\enn
which, in conjunction with \eqref{k7} gives \eqref{k5}. The proof is completed.

\end{proof}
\begin{lemma}\label{l03}
Suppose that the assumptions in Theorem \ref{p1} hold and $\bar{u}>0$. Then there exists a constant $C_0$ independent of $t$ such that
\be\label{k10}
\|r^{(n-1)/2}\ti{u}_t(t)\|^2+\|(r^{n-1}v^0)_{rr}(t)\|^2
+\int_0^t \big(\|r^{(n-1)/2}\ti{u}_{rt}\|^2+\bar{u}\|(r^{n-1}v^0)_{rr}\|^2
\big)d\tau\leq  C_0
\ee
and
\be\label{k11}
\|(r^{n-1}\ti{u}_r)_r(t)\|^2+\int_0^t
\big(\|(r^{n-1}\ti{u}_r)_{r}\|^2+\|(r^{n-1}\ti{u}_r)_{rr}\|^2\big)d\tau
\leq C_0.
\ee
\end{lemma}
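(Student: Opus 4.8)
The plan is to push the energy method of Lemmas~\ref{l01}--\ref{l02} one derivative further, testing differentiated versions of \eqref{e8} against weighted quantities, and --- this is what keeps the constants independent of $t$ --- integrating the resulting differential inequalities \emph{directly in time}, fed by the global dissipation already at hand: $\int_0^t\|r^{(n-1)/2}\ti u_r\|^2\,d\tau\le C_0$ from \eqref{k2}, and $\int_0^t\|r^{(n-1)/2}\ti u_t\|^2\,d\tau\le C_0$, $\int_0^t\bar u\|(r^{n-1}v^0)_r\|^2\,d\tau\le C_0$ from \eqref{k5}, together with the $t$-uniform bounds $\|\ti u(t)\|_{L^\infty}+\|v^0(t)\|_{L^\infty}+\|v^0_r(t)\|\le C_0$ coming from \eqref{k2}, \eqref{k5}, \eqref{k12} and $H^1\hookrightarrow L^\infty$. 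Since $\ti u|_{r=a,b}=0$ (hence $\ti u_t|_{r=a,b}=0$), Poincar\'e applies, and since $r$ ranges over $[a,b]$ one may freely trade $\|r^{(n-1)/2}f\|,\|f\|,\|f_r\|$ against $\|(r^{n-1}f)_r\|$, and so on.

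For the first half of \eqref{k10} I differentiate the first equation of \eqref{e8} in $t$, set $w:=\ti u_t$, multiply by $2r^{n-1}w$ and integrate over $(a,b)$. The $t$-derivative of $\bar u(r^{n-1}v^0)_r$ is $\bar u(r^{n-1}\ti u_r)_r$, contributing after integration by parts the perfect derivative $-\bar u\frac{d}{dt}\|r^{(n-1)/2}\ti u_r\|^2$, which I move to the left; the remaining right-hand side $-2\int_a^b r^{n-1}(wv^0+\ti u\ti u_r)w_r\,dr$ is bounded by $\|r^{(n-1)/2}w_r\|^2+C_0\|v^0\|_{L^\infty}^2\|r^{(n-1)/2}w\|^2+C_0\|\ti u\|_{L^\infty}^2\|r^{(n-1)/2}\ti u_r\|^2$. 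Integrating in time, absorbing the first term, and using the dissipation bounds above (plus $\|r^{(n-1)/2}\ti u_t(0)\|<\infty$ from $(u_0,v_0)\in H^2\times H^2$) yields $\|r^{(n-1)/2}\ti u_t(t)\|^2+\int_0^t\|r^{(n-1)/2}\ti u_{rt}\|^2\,d\tau\le C_0$. As a byproduct, rewriting the first equation of \eqref{e8} as $(r^{n-1}\ti u_r)_r=r^{n-1}\ti u_t-(r^{n-1}\ti u v^0)_r-\bar u(r^{n-1}v^0)_r$ gives $\|(r^{n-1}\ti u_r)_r(t)\|\le C_0$ and $\int_0^t\|(r^{n-1}\ti u_r)_r\|^2\,d\tau\le C_0$, hence $\|\ti u_{rr}(t)\|\le C_0$ and $\int_0^t\|\ti u_{rr}\|^2\,d\tau\le C_0$.

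For the second half of \eqref{k10}: using $v^0_t=\ti u_r$, the quantity $\psi:=(r^{n-1}v^0)_{rr}$ solves $\psi_t+\bar u\psi=(r^{n-1}\ti u_t)_r-(r^{n-1}\ti u v^0)_{rr}$ (differentiate \eqref{e9} once more in $r$), and testing against $2\psi$ gives $\frac{d}{dt}\|\psi\|^2+\bar u\|\psi\|^2\le\frac{C_0}{\bar u}\big(\|(r^{n-1}\ti u_t)_r\|^2+\|(r^{n-1}\ti u v^0)_{rr}\|^2\big)$. Every term of $(r^{n-1}\ti u v^0)_{rr}$ other than $r^{n-1}\ti u\,v^0_{rr}$ is, via \eqref{k12} and $H^1\hookrightarrow L^\infty$, in $L^2(0,t;L^2)$ with a $C_0$ bound (using $\int_0^t\|\ti u_{rr}\|^2\,d\tau\le C_0$), and since $\|v^0_{rr}\|\le C_0\|\psi\|+C_0$ one gets $\|(r^{n-1}\ti u v^0)_{rr}\|^2\le C_0\|\ti u\|_{L^\infty}^2\|\psi\|^2+g(t)$ with $\int_0^t g\,d\tau\le C_0$; likewise $\|(r^{n-1}\ti u_t)_r\|^2$ is integrable in $t$ by Step~1. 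Gronwall's inequality then closes the estimate precisely because the exponent $\frac{C_0}{\bar u}\int_0^t\|\ti u\|_{L^\infty}^2\,d\tau\le\frac{C_0}{\bar u}\int_0^t\|r^{(n-1)/2}\ti u_r\|^2\,d\tau\le C_0$ is bounded \emph{uniformly in $t$} by \eqref{k2}, so $\|(r^{n-1}v^0)_{rr}(t)\|^2+\int_0^t\bar u\|(r^{n-1}v^0)_{rr}\|^2\,d\tau\le C_0$. This also gives \eqref{k11}: differentiating $(r^{n-1}\ti u_r)_r=r^{n-1}\ti u_t-(r^{n-1}\ti u v^0)_r-\bar u(r^{n-1}v^0)_r$ once more in $r$ gives $(r^{n-1}\ti u_r)_{rr}=(r^{n-1}\ti u_t)_r-(r^{n-1}\ti u v^0)_{rr}-\bar u(r^{n-1}v^0)_{rr}$, whose three right-hand terms are now known to lie in $L^2(0,t;L^2)$ with $C_0$ bounds, so $\int_0^t\|(r^{n-1}\ti u_r)_{rr}\|^2\,d\tau\le C_0$.

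The main obstacle is keeping all constants independent of $t$: a naive Gronwall in the $\psi$-estimate would produce a useless factor $e^{C_0t/\bar u}$. The resolution is that the dissipation integrals of Lemmas~\ref{l01}--\ref{l02} are finite \emph{uniformly in $t$}, so the Gronwall coefficient $\|\ti u\|_{L^\infty}^2\le C_0\|r^{(n-1)/2}\ti u_r\|^2$ and all forcing terms are globally time-integrable --- hence one should integrate the differential inequalities directly rather than estimate wastefully on $[0,t]$. The rest is routine bookkeeping: checking that every occurrence of the quadratic term $\ti u v^0$ and its $r$-derivatives reduces, via the one-dimensional inequalities \eqref{k12}, to the already-controlled norms, carried out in the order first half of \eqref{k10} $\to$ its $\ti u_{rr}$ byproduct $\to$ second half of \eqref{k10} $\to$ \eqref{k11}; and the formal differentiations in $t$ and $r$ are justified, as usual, by performing them on a Galerkin or mollified approximation and passing to the limit.
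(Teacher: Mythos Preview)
Your proposal is correct and follows essentially the same route as the paper: differentiate the first equation of \eqref{e8} in $t$ and test against $r^{n-1}\ti u_t$ to get \eqref{k14}, then differentiate \eqref{e9} in $r$ and test against $(r^{n-1}v^0)_{rr}$, closing via Gronwall with the globally time-integrable coefficient $\|\ti u_r\|^2$ (equivalently your $\|\ti u\|_{L^\infty}^2$), and finally read off \eqref{k11} from the equation. The only cosmetic differences are that you treat the $\bar u(r^{n-1}v^0)_{rt}$ contribution as the perfect derivative $-\bar u\frac{d}{dt}\|r^{(n-1)/2}\ti u_r\|^2$ (the paper simply absorbs it by Cauchy--Schwarz) and that you record the pointwise bound $\|(r^{n-1}\ti u_r)_r(t)\|\le C_0$ earlier as a byproduct; neither changes the argument.
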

\begin{proof}
Differentiating the first equation of \eqref{e8} with respect to $t$ and multiplying the result with $2r^{n-1}\ti{u}_t$, we get upon integration by parts that
\be\label{k13}
\begin{split}
\frac{d}{dt}&\|r^{(n-1)/2}\ti{u}_t\|^2+2\|r^{(n-1)/2}
\ti{u}_{rt}\|^2\\
&=-2\int_a^b (r^{n-1}\ti{u}v^0)_t \ti{u}_{rt}dr
-2\bar{u}\int_a^b (r^{n-1}v^0)_t\ti{u}_{rt}\\
&:=I_5+I_6.
\end{split}
\ee
 By \eqref{k12} and the second equation of \eqref{e8} we have that
\ben
\begin{split}
I_5\leq& C_0\left(\|\ti{u}_t\|_{L^\infty}
\|r^{(n-1)/2}v^0\|\|r^{(n-1)/2}\ti{u}_{rt}\|
+\|\ti{u}\|_{L^\infty}\|v^0_t\|\|r^{(n-1)/2}\ti{u}_{rt}\|
\right)\\
\leq& C_0\left(\|r^{(n-1)/2}\ti{u}_{t}\|^{1/2}
\|r^{(n-1)/2}\ti{u}_{rt}\|^{3/2}\|r^{(n-1)/2}v^0\|
+\|r^{(n-1)/2}\ti{u}_r\| \|\ti{u}_r\|\|r^{(n-1)/2}\ti{u}_{rt}\|
\right)
\\
\leq & \frac{1}{2}\|r^{(n-1)/2}\ti{u}_{rt}\|^2
+C_0\left( \|r^{(n-1)/2}\ti{u}_{t}\|^2\|r^{(n-1)/2}v^0\|^4
+\|r^{(n-1)/2}\ti{u}_r\|^4
\right).
\end{split}
\enn
 We use again the second equation of \eqref{e8} and Cauchy-Schwarz inequality to get
\ben
I_6\leq \frac{1}{2}\|r^{(n-1)/2}\ti{u}_{rt}\|^2
+2\bar{u}^2 \|r^{(n-1)/2}\ti{u}_r\|^2.
\enn
Substituting the above estimates for $I_5$-$I_6$ into \eqref{k13}, then integrating the results over $(0,t)$ and using Lemma \ref{l01} along with Lemma \ref{l02}, we conclude that
\be\label{k14}
\|r^{(n-1)/2}\ti{u}_t(t)\|^2
+\int_0^t \|r^{(n-1)/2}\ti{u}_{rt}\|^2d\tau\leq  C_0.
\ee
We proceed to estimating the remaining part
$\|(r^{n-1}v^0)_{rr}(t)\|^2
+\bar{u}\int_0^t \|(r^{n-1}v^0)_{rr}\|^2 d\tau$ in \eqref{k10}. Differentiating \eqref{e9} with respect to $r$
and multiplying the resulting equation with $2(r^{n-1}v^0)_{rr}$ we get
\be\label{k15}
\begin{split}
\frac{d}{dt}&\|(r^{n-1}v^0)_{rr}\|^2+2\bar{u}\|(r^{n-1}v^0)_{rr}\|^2\\
=&2\int_a^b (r^{n-1}\ti{u}_t)_r(r^{n-1}v^0)_{rr}dr\\
&-2\int_a^b (r^{n-1}\ti{u}v^0)_{rr}(r^{n-1}v^0)_{rr}\\
:=&I_{7}+I_8.
\end{split}
\ee
To estimate of $I_7$,
we note for $g(r,t)\in L^2(a,b)$ with fixed $t>0$, it follows that
\be\label{k24}
b^{-(n-1)}\|r^{(n-1)/2}g(t)\|^2\leq\|g(t)\|^2\leq a^{-(n-1)}\|r^{(n-1)/2}g(t)\|^2.
\ee
Then from Cauchy-Schwarz inequality and \eqref{k24} one derives
\ben
I_7\leq \frac{\bar{u}}{2}\|(r^{n-1}v^0)_{rr}\|^2
+C_0(\|r^{(n-1)/2}\ti{u}_t\|^2+\|r^{(n-1)/2}\ti{u}_{rt}\|^2).
\enn
To bound $I_8$ we first estimate $\int_0^t\|(r^{n-1}\ti{u}_r)_r\|^2d\tau$ by the first equation of \eqref{e8} as follows:
\be\label{k16}
\begin{split}
\int_0^t\|(r^{n-1}\ti{u}_r)_r\|^2d\tau
\leq& \int_0^t\|r^{n-1}\ti{u}_t\|^2 d\tau
+C_0\int_0^t \|\ti{u}_{r}\|^2d\tau \cdot \|(r^{n-1}v^0)\|_{L^\infty(0,t;H^1)}^2\\
&+C_0\bar{u}^2\int_{0}^t\|(r^{n-1}v^0)_r\|^2d\tau\\
\leq& C_0,
\end{split}
\ee
where \eqref{k12} and Lemma \ref{l01} - Lemma \ref{l02} have been used. Then \eqref{k16} along with \eqref{k24} and \eqref{k2} implies that
\be\label{k17}
\int_0^t\|\ti{u}_{rr}\|^2d\tau\leq C_0\int_0^t(\|(r^{n-1}\ti{u}_r)_r\|^2+\|r^{(n-1)/2}
\ti{u}_r\|^2)d\tau\leq C_0,
 \ee
where the constant $C_0$ depends on $a$ and $b$. Noting that $(r^{n-1}\ti{u}v^0)_{rr}=
(r^{n-1}v^0)_{rr}\ti{u}
+2(r^{n-1}v^0)_r\ti{u}_r+(r^{n-1}v^0)\ti{u}_{rr}$, one deduces by \eqref{k12} and the Sobolev embedding inequality that
\ben
\begin{split}
I_8\leq& \frac{\bar{u}}{2}\|(r^{n-1}v^0)_{rr}\|^2
+\frac{2}{\bar{u}}\|(r^{n-1}\ti{u}v^0)_{rr}\|^2\\
\leq &\frac{\bar{u}}{2}\|(r^{n-1}v^0)_{rr}\|^2
+C_0\big(\|\ti{u}_r\|^2\|(r^{n-1}v^0)_{rr}\|^2\\
&+\|\ti{u}_r\|^2\|(r^{n-1}v^0)_{r}\|^2
+ \|\ti{u}_{rr}\|^2\|(r^{n-1}v^0)_{r}\|^2
+\|\ti{u}_{rr}\|^2\|v^0\|^2\big).
\end{split}
\enn
We feed \eqref{k15} on the above estimates for $I_7$-$I_8$ then apply Gronwall's inequality, Lemma \ref{l01} - Lemma \ref{l02}, \eqref{k14} and \eqref{k17} to the result to find
\ben
\|(r^{n-1}v^0)_{rr}(t)\|^2
+\bar{u}\int_0^t \|(r^{n-1}v^0)_{rr}\|^2
d\tau\leq  C_0,
\enn
which, along with \eqref{k14} yields \eqref{k10}. We next prove \eqref{k11}.
By similar arguments as deriving \eqref{k16} one gets
\be\label{k20}
\begin{split}
\|(r^{n-1}\ti{u}_r)_r(t)\|^2
\leq &\|r^{n-1}\ti{u}_t(t)\|^2
+ C_0\|\ti{u}_{r}(t)\|^2 \|(r^{n-1}v^0)(t)\|_{H^1}^2\\
&+C_0\bar{u}^2\|(r^{n-1}v^0)_r(t)\|^2\\
\leq& C_0,
\end{split}
\ee
where \eqref{k14} and Lemma \ref{l01} - Lemma \ref{l02} have been used. We differentiate \eqref{e9} with respect to $r$ and conclude that
\be\label{k19}
\begin{split}
\int_0^t&\|(r^{n-1}\ti{u}_r)_{rr}\|^2d\tau\\
\leq &C_0\left(\int_0^t\|r^{n-1}\ti{u}_{rt}\|^2 d\tau
+\int_0^t\|r^{(n-1)/2}\ti{u}_{t}\|^2 d\tau
+\bar{u}^2\int_0^t \|(r^{n-1}v^0)_{rr}\|^2d\tau
\right)
\\
&+C_0\int_0^t \big(\|\ti{u}_{r}\|^2+\|\ti{u}_{rr}\|^2\big)d\tau \cdot \|(r^{n-1}v^0)\|_{L^\infty(0,t;H^2)}^2
\\
\leq &C_0,
\end{split}
\ee
where we have used \eqref{k10}, Lemma \ref{l01} and Lemma \ref{l02}. Finally collecting \eqref{k16}, \eqref{k20} and \eqref{k19} we derive \eqref{k11}. The proof is finished.

\end{proof}
We are now in the position to prove Theorem \ref{p1} by the above Lemma \ref{l01} - Lemma \ref{l03}.\\
\newline
\textbf{Proof of Theorem \ref{p1}}.
We first prove Part (i) of Theorem \ref{p1}. By Lemma \ref{l01} and \eqref{k24}, one derives
\be\label{k21}
\|v^0(t)\|^2\leq C_0\|r^{(n-1)/2}v^0(t)\|^2\leq C_0,\qquad
\|\ti{u}(t)\|^2+\int_0^t \|\ti{u}\|_{H^1}^2d\tau\leq C_0,
\ee
where the constant $C_0$ depends on $a,b$ and $n$ and the Poincar\'{e} inequality $\|\ti{u}\|^2\leq C_0\|\ti{u}_r\|^2$ has been used.
 On the other hand, for $f(r,t)\in H^1$ with fixed $t$ we have
 \be\label{jj1}
 \begin{split}
 \|f_r\|^2=
&\|r^{-(n-1)}[(r^{n-1}f)_r-(n-1)r^{n-2}f]\|^2\\
\leq &a^{-2(n-2)}\|(r^{n-1}f)_r\|^2
+a^{-2(n-1)}(n-1)b^{2(n-2)}\|f\|^2\\
\leq & C_0 (\|(r^{n-1}f)_r\|^2+\|f\|^2).
\end{split}
\ee
  Then it follows from Lemma \ref{l02}, Lemma \ref{l01}, \eqref{k24} and \eqref{jj1} that
\be\label{k22}
\|v^0_r(t)\|^2+\|\ti{u}_r(t)\|^2
+\int_0^t\big(\bar{u}\|(r^{(n-1)}v^0)_r\|^2
+\|\ti{u}_t\|^2\big)d\tau\leq C_0.
\ee
Similarly, it follows from Lemma \ref{l03} and \eqref{jj1} that
\be\label{jj2}
\|v^0_{rr}(t)\|^2+\|\tilde{u}_{rr}(t)\|^2
+\int_0^t\left(\bar{u}\|(r^{(n-1)}v^0)_{rr}\|^2
+\|\tilde{u}_{rr}\|^2+\|\tilde{u}_{rrr}\|^2\right)\,d\tau
\leq C_0.
\ee
 Thus collecting \eqref{k21}, \eqref{k22} and \eqref{jj2} we derive the desired \emph{a priori} estimate \eqref{k23}, which along with the fixed point theorem implies the existence of solution $(u^0,v^0)$ in $C([0,\infty);H^2\times H^2)$.

 We next prove \eqref{i10}. Integrating \eqref{i9} over $(0,\infty)$ with respect to $t$, then using Lemma \ref{l01} and Lemma \ref{l02}, we have
 \ben
 \begin{split}
 \int_0^\infty& \frac{d}{dt}\|r^{(n-1)/2}\ti{u}_r\|^2dt\\
 \leq &C_0 \|r^{(n-1)/2}\ti{u}_r\|_{L^2(0,\infty;L^2)}^2
 \left(\|r^{(n-1)/2}v^0\|_{L^\infty(0,\infty;L^2)}^2
 +\|(r^{n-1}v^0)_r\|_{L^\infty(0,\infty;L^2)}^2\right)\\
 &+C_0\|(r^{n-1}v^0)_r\|_{L^2(0,\infty;L^2)}^2\\
 \leq& C_0,
 \end{split}
 \enn
 which, along with \eqref{k2} implies that
 $
 \|r^{(n-1)/2}\ti{u}_r\|^2\in W^{1,1}(0,\infty).
 $
  Hence, it follows that
  \ben
  \lim_{t\rightarrow \infty}\|\ti{u}_r\|
  \leq C_0\lim_{t\rightarrow \infty}\|r^{(n-1)/2}\ti{u}_r\|=0,
  \enn
  which, along with the Gagliardo-Nirenberg inequality  $\|(u^0-\bar{u})(t)\|_{L^\infty}^2\leq C_0 \|(u^0-\bar{u})(t)\|_{L^2}
  \|(u^0-\bar{u})_r(t)\|_{L^2}$ and \eqref{k2}, gives \eqref{i10}.
  Part (i) of Theorem \ref{p1} is thus proved.

We proceed to prove Part (ii). When $\bar{u}=0$, for $0<T<\infty$ one can easily deduce the \emph{a priori} estimates  \eqref{k26} by the standard energy method that bootstraps the regularity of the solution $(u^0,v^0)$ from $L^2$ to $H^2$. We omit this procedure for simplicity and refer readers to \cite{LZ} for details. Then the existence of solution $(u^0,v^0)$ follows from \eqref{k26} and the fixed point theorem. The proof is finished.

\endProof


\section{Proof of Theorem \ref{t1} and Proposition \ref{p2}.}
 Let $(u^\va,v^\va)$ and $(u^0,v^0)$ be the solutions of \eqref{e3}-\eqref{e4} corresponding to $\va>0$ and $\va=0$ respectively. Then the initial-boundary value problem for their differences $h:=u^\va-u^0$, $w:=v^\va-v^0$ reads:
\be\label{e5}
\left\{\begin{split}
&h_{t}=\frac{1}{r^{n-1}}(r^{n-1}h_r)_r+\frac{1}{r^{n-1}}(r^{n-1}hw)_r
+\frac{1}{r^{n-1}}(r^{n-1}u^0w)_r+\frac{1}{r^{n-1}}(r^{n-1}hv^0)_r,\\
&w_{t}=\va\frac{1}{r^{n-1}}(r^{n-1}w_{r})_r-2\va ww_r+h_r
+\va\frac{1}{r^{n-1}}(r^{n-1}v^0_{r})_r-2\va (wv^0_r+ v^0w_r
+v^0v^0_r)\\
&\quad\quad\,-\va \frac{n-1}{r^2}(w+v^0),\qquad \quad(r,t)\in (a,b)\times (0,\infty) \\
&(h,w)(r,0)=(0,0),\\
&h|_{r=a,b}=0,\, \,\, w|_{r=a}=\bar{v}_1-v^0(a,t),\,w|_{r=b}=\bar{v}_2-v^0(b,t).
\end{split}
\right.
\ee

To prove Theorem \ref{t1} we shall invoke an elementary result (see Lemma \ref{l0}) on an ordinary differential equation (ODE) and a series of lemmas on the \emph{a priori} estimates for solutions of \eqref{e5}. In particular, the $L^2$-estimate for solution $(h,w)$ and higher regularity estimates for the solution component $h$ will be established in Lemma \ref{l1} - Lemma \ref{l4}, and Lemma \ref{l5} will give a weighted $L^2$-estimate for the derivative of $w$.

We proceed to prove the following Lemma, which gives an upper bound for the solution of an ODE involving a small parameter $\gamma$. It extends a result in \cite{constantin1986note,wuxu2014} with $k=2$ to any integer $k\geq 2$.

\begin{lemma}\label{l0}
 Let $k\geq 2$ be an integer and $0<T<\infty$. Let $C_0>1$ be a constant independent of $T$ and $f_{1}(t)$, $f_{2}(t)\geq 0$ be two continuous functions on $[0,T]$. Consider the ODE
\be\label{e0}
\left\{
\aligned
&\frac{d}{dt}y(t)\leq \gamma f_{1}(t)+f_{2}(t)y(t)+C_0[y^2(t)+\cdots+y^{k}(t)],\\
&y(0)=0.
\endaligned
\right.
\ee
If we set
\be\label{j1}
\gamma_{0}=\min\left\{
[4(k-1)]^{-1}\left(\int_{0}^Tf_{1}(t)\,dt\right)^{-1},\,\,\,\,
[8TG(k-1)^2]^{-1}\left(\int_{0}^{T}
f_{1}(t)\,dt\right)^{-1}
\right\},
\ee
with $G:=C_0\left(e^{\int_{0}^T f_{2}(t)\,dt}
\right)^{k-1}$.
Then for $\gamma\in (0,\gamma_{0}]$, any solution $y(t)\geq 0$ of \eqref{e0} satisfies
\be\label{j5}
y(t)\leq e^{\int_{0}^{T}f_{2}(t)\,dt}\cdot\min\left\{3,\,\,
\frac{3}{2T(k-1)G},\,\,12(k-1)\gamma\int_{0}^{T}f_{1}(t)\,dt
\right\}, \quad t\in [0,T].
\ee
\end{lemma}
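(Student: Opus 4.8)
\textbf{Proof proposal for Lemma \ref{l0}.}

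The plan is to treat \eqref{e0} as a scalar differential inequality and to bootstrap from a crude \emph{a priori} bound to the sharp estimate \eqref{j5} via a continuity (bootstrap) argument. First I would dispose of the linear term $f_2(t)y(t)$ by the integrating factor: setting $z(t):=e^{-\int_0^t f_2(s)\,ds}y(t)$, a direct computation shows $z$ satisfies a differential inequality of the same shape but without the linear term, namely $z'(t)\le \gamma e^{-\int_0^t f_2}f_1(t)+C_0\sum_{j=2}^k e^{(j-1)\int_0^t f_2}z^j(t)\le \gamma f_1(t)+G\sum_{j=2}^k z^j(t)$ after absorbing the exponential factors into $G:=C_0(e^{\int_0^T f_2})^{k-1}$ (valid as long as $z\le 1$, say, so that $e^{(j-1)\int f_2}z^{j}\le (e^{\int_0^T f_2})^{k-1}z^2$; this is where the $(k-1)$ power in $G$ comes from). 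So it suffices to prove the bound for the reduced inequality $z'\le \gamma f_1(t)+G(z^2+\cdots+z^k)$, $z(0)=0$, and then multiply back by $e^{\int_0^T f_2}$.

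Next I would run the bootstrap. Suppose that on some maximal subinterval $[0,\tau)$ one has $z(t)\le 1$; there, $z^2+\cdots+z^k\le (k-1)z^2$, so $z'(t)\le \gamma f_1(t)+(k-1)G\,z^2(t)$. This is a Riccati-type inequality, which I would compare with the solution of the scalar ODE $Y'=\gamma f_1(t)+(k-1)G\,Y^2$, $Y(0)=0$ — separable in an obvious way after freezing $\int_0^t f_1$. Writing $M:=\int_0^T f_1(t)\,dt$ and integrating crudely ($f_1\ge 0$ so $\int_0^t f_1\le M$), one gets that as long as $Y$ stays bounded, $Y(t)\le \gamma M/(1-2(k-1)G\,\gamma M\,t)$-type control; more carefully, on $[0,T]$ one shows $z(t)\le 2\gamma M$ provided $2(k-1)G\gamma\cdot (2\gamma M)\cdot T\le \tfrac12$, i.e. provided $\gamma$ is at most the second quantity in \eqref{j1} (which forces $8T G(k-1)^2\gamma M\le (k-1)$, hence the relevant smallness). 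Simultaneously the first quantity in \eqref{j1}, $\gamma\le [4(k-1)M]^{-1}$, guarantees $2\gamma M\le 1/(2(k-1))\le 1/2<1$, so the standing assumption $z\le 1$ is improved strictly, and the bootstrap closes: $\tau=T$ and $z(t)\le 2\gamma M$ on $[0,T]$. Undoing the substitution gives $y(t)\le 2\gamma M\,e^{\int_0^T f_2}$, which is $\le 12(k-1)\gamma\int_0^T f_1\cdot e^{\int_0^T f_2}$ (the last entry in the min in \eqref{j5}), and is also $\le 3e^{\int_0^T f_2}$ and $\le \tfrac{3}{2T(k-1)G}e^{\int_0^T f_2}$ by the two defining inequalities for $\gamma_0$; taking the minimum over the three competing bounds yields exactly \eqref{j5}.

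The one point that needs genuine care — and which I expect to be the main obstacle — is making the bootstrap quantitatively honest: one must verify that the two thresholds in the definition \eqref{j1} of $\gamma_0$ are exactly the ones that (a) keep $z$ below $1$ so the collapse $\sum_{j=2}^k z^j\le (k-1)z^2$ is legitimate, and (b) keep the Riccati denominator $1-2(k-1)G\gamma M T$ bounded away from $0$ (say $\ge \tfrac12$) on all of $[0,T]$, so that no blow-up occurs before time $T$. Both reduce to elementary inequalities in $\gamma M$, $T$, $G$, $k$, but one has to track the constants $2,4,8,12$ through the separation-of-variables step and the final triple minimum; I would organize this as: (1) the reduction removing $f_2$; (2) the continuity set $\{t:z\le 1\}$ is open, nonempty, and closed in $[0,T]$ under the two smallness hypotheses; (3) on it, solve/compare the Riccati inequality to get $z\le 2\gamma M$; (4) check $2\gamma M<1$ to close the bootstrap; (5) reinsert $e^{\int f_2}$ and compare the resulting bound against $3$ and $\tfrac{3}{2T(k-1)G}$ to produce the stated minimum. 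The generalization from $k=2$ (the case in \cite{constantin1986note,wuxu2014}) to general $k\ge 2$ costs only the factor $(k-1)$ in front of $z^2$ and the $(k-1)$-power in $G$, which is precisely why those combinatorial factors appear in \eqref{j1} and \eqref{j5}.
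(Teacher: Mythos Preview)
Your bootstrap argument is correct and does prove the lemma, but it is genuinely different from the paper's proof. Both of you begin with the same substitution $U(t)=e^{-\int_0^t f_2}y(t)$ to remove the linear term and arrive at an inequality of the form $U'\le \gamma f_1+G\cdot(\text{nonlinearity in }U)$. From there the methods diverge.

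You collapse $\sum_{j=2}^k U^j$ to $(k-1)U^2$ under the standing hypothesis $U\le 1$, reduce to the Riccati inequality $U'\le \gamma f_1+(k-1)GU^2$, and close a continuity argument: assuming $U\le 2\gamma M$ (with $M=\int_0^T f_1$), integrate to improve the bound to $\tfrac32\gamma M$, using the second entry of \eqref{j1}; the first entry of \eqref{j1} guarantees $2\gamma M<1$ so the collapse step is legitimate; then read off the three competing bounds in \eqref{j5}. This is perfectly sound and in fact yields a slightly sharper constant than \eqref{j5}.

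The paper instead keeps the full nonlinearity in the form $GU^2(1+U)^{k-2}$ and integrates \emph{directly}, with no bootstrap. The key device is to introduce a free parameter $\sigma=\min\{G,\ \tfrac{1}{4T^2(k-1)^2G},\ 16(k-1)^2\gamma^2GM^2\}$, divide the differential inequality by $(1+\sqrt{G/\sigma}\,U)^k$, and observe that because $\sigma\le G$ the nonlinear term becomes bounded by the constant $\sigma$. One then integrates the resulting exact differential to get $(1+\sqrt{G/\sigma}\,U(t))^{k-1}\le 4$, hence $U(t)\le 3\sqrt{\sigma/G}$; the three-way minimum in \eqref{j5} then falls out \emph{automatically} from the three-way minimum defining $\sigma$, via a short case analysis showing $\gamma M\le \tfrac{1}{4(k-1)}\sqrt{\sigma/G}$ in each case.

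What each buys: your approach is entirely elementary and transparent, but it rests on the a priori smallness $U\le 1$ and on a continuity argument whose closing has to be checked carefully. The paper's approach avoids any bootstrap and never needs an a priori bound on $U$; the division trick handles the full polynomial nonlinearity in one stroke, and the three bounds of \eqref{j5} are unified through the single parameter $\sigma$. If you want to match the paper's presentation you might replace your Riccati/continuity step by this division-by-$(1+\lambda U)^k$ argument; otherwise your route stands on its own.
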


\begin{proof}
Let $U(t)=y(t)e^{-\int_{0}^t f_{2}(\tau)\,d\tau}$. Then \eqref{e0} can be rewritten as
\ben
\frac{d}{dt}U(t)\leq
\gamma f_{1}(t)e^{-\int_{0}^t f_{2}(\tau)\,d\tau}+C_0\left(e^{\int_{0}^t f_{2}(\tau)\,d\tau}
\right)U^2+\cdots+C_0\left(e^{\int_{0}^t f_{2}(\tau)\,d\tau}
\right)^{k-1}U^k.
\enn
Noting that $e^{\int_0^tf_2(\tau)d\tau}\geq 1$ thanks to $f_2(t)\geq 0$,
 we deduce that
\be\label{i06}
\left\{
\aligned
&\frac{d}{dt}U(t)\leq \gamma f_{1}(t)+GU^2(t)(1+U(t))^{k-2},\\
&U(0)=0.
\endaligned
\right.
\ee
For later use, we define
\be\label{j2}
\sigma =\min\left\{
G,\,\,\frac{1}{4T^2(k-1)^2G},\,\,
16(k-1)^2\gamma^2G\left(\int_{0}^T f_{1}(t)\,dt\right)^2
\right\}.
\ee
Now dividing both sides of \eqref{i06} by $\left(1+\sqrt{\frac{G}{\sigma}}\right)^k$, it follows that
\ben
\begin{split}
\frac{\frac{d}{dt}U(t)}{\left(1+\sqrt{\frac{G}{\sigma}}U(t)\right)^k}
\leq \gamma f_{1}(t)+\frac{G U^2(t)}{\left(1+\sqrt{\frac{G}{\sigma}}U(t)\right)^2}
\cdot \frac{\left(1+U(t)\right)^{k-2}}
{\left(1+\sqrt{\frac{G}{\sigma}}U(t)\right)^{k-2}}.
\end{split}
\enn
 Then noting $\sigma\leq G$ due to its definition \eqref{j2}, we deduce from the above inequality that
\ben
\frac{\frac{d}{dt}U(t)}{\left(1+\sqrt{\frac{G}{\sigma}}U(t)\right)^k}
\leq &\gamma f_{1}(t) +\sigma,
\enn
which integrated over $(0,t)$ with $t\in (0,T]$,  yields
\be\label{j3}
\begin{split}
\frac{\sqrt{\frac{\sigma}{G}}}{k-1}\cdot \frac{1}
{\left(1+\sqrt{\frac{G}{\sigma}}U(t)\right)^{k-1}}
\geq&  \frac{\sqrt{\frac{\sigma}{G}}}{k-1}
-\sigma T-\gamma \int_{0}^{T}f_{1}(t)\,dt\\
\geq&  \frac{\sqrt{\frac{\sigma}{G}}}{2(k-1)}
-\gamma \int_{0}^{T}f_{1}(t)\,dt,
\end{split}
\ee
where we have used the fact $\sigma T\leq \frac{\sqrt{\frac{\sigma}{G}}}{2(k-1)}$, thanks to the definition of $\sigma$.
We shall prove that
\be\label{j4}
\gamma \int_{0}^{T}f_{1}(t)\,dt\leq
\frac{\sqrt{\frac{\sigma}{G}}}{4(k-1)},
\ee
of which the proof is split into three cases by the value of $\sigma$.\\
Case 1, when $\sigma=G$, it follows from the definition of $\gamma_0$ in \eqref{j1} that
\ben
\gamma\int_{0}^{T}f_{1}(t)\,dt\leq \gamma_{0}
\int_{0}^{T}f_{1}(t)\,dt\leq\frac{1}{4(k-1)}=
\frac{\sqrt{\frac{\sigma}{G}}}{4(k-1)}.
\enn
Case 2, when $\sigma=\frac{1}{4T^2(k-1)^2G}$, we have by using \eqref{j1} again that
\ben
\gamma\int_{0}^{T}f_{1}(t)\,dt\leq \gamma_{0}
\int_{0}^{T}f_{1}(t)\,dt
\leq \frac{1}{8TG(k-1)^2}
=
\frac{\sqrt{\frac{\sigma}{G}}}{4(k-1)}.
\enn
Case 3, when $\sigma=16(k-1)^2\gamma^2G\left(\int_{0}^T f_{1}(t)\,dt\right)^2$, one immediately get
\ben
\gamma\int_{0}^{T}f_{1}(t)\,dt=
\frac{\sqrt{\frac{\sigma}{G}}}{4(k-1)}.
\enn
Hence combining the above Case 1 - Case 3, we conclude that \eqref{j4} holds true and it follows from \eqref{j4} and \eqref{j3} that
\ben
\left(1+\sqrt{\frac{G}{\sigma}}U(t)\right)^{k-1}
\leq  4, \quad \,t\in [0,T]
\enn
thus
\ben
U(t)\leq 3 \sqrt{\frac{\sigma}{G}},\quad \,t\in [0,T]
\enn
 which, along with \eqref{j2} and the definition of $U(t)$, yields the desired estimate \eqref{j5}.
 The proof is finished.

\end{proof}

In the sequel, for convenience we denote
\ben
E(t):=\|r^{(n-1)/2}h(t)\|^2+\|r^{(n -1)/2}w(t)\|^2
+\va\|r^{(n-1)/2}w_{r}(t)\|^2,
\enn
\be\label{j14}
F(t):=\|u^0(t)\|_{H^2}^2+\|v^0(t)\|_{H^2}+
\|v^0(t)\|_{H^2}^2+\|v^0(t)\|_{H^2}^4+|\bar{v}_1|^2+|\bar{v}_2|^2+1.
\ee
The following lemma gives the $L^2$-estimate for the solution $(h,w)$ of problem \eqref{e5}.
\begin{lemma}\label{l1}
Let $0<t<\infty$. Then there exists a constant $C_0$ independent of $\va$ and $t$, such that
\be\label{i01}
\begin{split}
\frac{d}{dt}&(\|r^{(n-1)/2}h(t)\|^2+\|r^{(n-1)/2}w(t)\|^2)
+\frac{3}{2}\|r^{(n-1)/2}h_r(t)\|^2\\
&+2\va \|r^{(n-1)/2}w_r(t)\|^2+2(n-1)\va \|r^{(n-3)/2}w(t)\|^2\\
\leq& C_0\va^2 F(t)+C_0F(t)E(t)+C_0E^2(t)+C_0E^3(t)+2\va[r^{n-1}w_{r}w]|_a^b.
\end{split}
\ee
\end{lemma}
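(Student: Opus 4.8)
The plan is to derive the differential inequality \eqref{i01} by the standard energy method: take the $L^2$ inner product of the $h$-equation in \eqref{e5} against $r^{n-1}h$, take the inner product of the $w$-equation against $r^{n-1}w$, add the two identities and integrate by parts, exploiting the crucial cancellation between the linear coupling terms. Specifically, from the $h$-equation the term $\frac{1}{r^{n-1}}(r^{n-1}u^0 w)_r$ contributes, after integration by parts and using $h|_{r=a,b}=0$, a term $-\int_a^b r^{n-1}u^0 w\, h_r\, dr$ (plus lower-order pieces), while from the $w$-equation the term $h_r$ contributes $+\int_a^b r^{n-1} h_r w\, dr$; these partially interact, but the genuinely dangerous coupling is handled because $h_r$ appears in both the diffusion term of $h$ and linearly in the $w$-equation, and one simply keeps $\|r^{(n-1)/2}h_r\|^2$ on the left with coefficient $\tfrac32$ after absorbing a $\tfrac14\|r^{(n-1)/2}h_r\|^2$ via Cauchy–Schwarz. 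The $\va$-diffusion term in the $w$-equation, $\va\frac{1}{r^{n-1}}(r^{n-1}w_r)_r$, integrated against $r^{n-1}w$ gives $-\va\|r^{(n-1)/2}w_r\|^2$ plus the boundary term $\va[r^{n-1}w_r w]|_a^b$, which is precisely the term displayed on the right of \eqref{i01}; the term $-\va\frac{n-1}{r^2}w$ against $r^{n-1}w$ gives $-(n-1)\va\|r^{(n-3)/2}w\|^2$.

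Next I would classify and bound all remaining terms on the right-hand side into the four structural types: $\va^2 F(t)$, $F(t)E(t)$, $E^2(t)$, $E^3(t)$. The terms that carry an explicit factor $\va$ and involve only $v^0$ and the boundary data (for instance $\va\int r^{n-1} v^0_{rr} w$, $\va\int v^0 v^0_r\, w$, the $\va\frac{n-1}{r^2}v^0 w$ term, and the pieces of $-2\va ww_r$ and $-2\va(wv^0_r+v^0 w_r)$ that produce a clean $\va$ after Young's inequality) are split by Cauchy–Schwarz with weight chosen so the $w$- or $w_r$-factor is absorbed into $E(t)$ or into the dissipation on the left, leaving $\va^2$ times a polynomial in $\|v^0\|_{H^2}$ and $|\bar v_i|$, hence $\le C_0\va^2 F(t)$; note $F(t)$ was defined in \eqref{j14} to contain exactly $\|v^0\|_{H^2}^2$, $\|v^0\|_{H^2}^4$, $|\bar v_1|^2+|\bar v_2|^2$ and the constant $1$ to accommodate these. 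The bilinear terms involving one factor of $u^0$ or $v^0$ (data) and one factor of $h$, $w$, $h_r$ or $w_r$ (unknown) are handled by $\|h\|_{L^\infty}\le C_0\|h_r\|$ (Poincaré, since $h$ vanishes at the endpoints) and by the Gagliardo–Nirenberg/Sobolev bound $\|w\|_{L^\infty}\le C_0(\|w\|+\|w_r\|)$, absorbing derivative factors into the left side and collecting the rest as $F(t)E(t)$. The genuinely nonlinear terms $\frac{1}{r^{n-1}}(r^{n-1}hw)_r$ against $r^{n-1}h$ and $-2\va ww_r$, $-2\va(wv^0_r+v^0w_r)$ against $r^{n-1}w$ produce cubic expressions in $(h,w,h_r,w_r)$; these are the source of the $E^2$ and $E^3$ terms, obtained after using $\|h\|_{L^\infty}\le C_0\|h_r\|$, $\|w\|_{L^\infty}\le C_0\|w\|^{1/2}\|w_r\|^{1/2}+C_0\|w\|$ (or its $H^1$ version) and Young's inequality to trade $\|w_r\|^{1/2}$ powers, with the $\va$ in front of the $\va w w_r$ term ensuring $\va\|w_r\|^2$ stays controlled by $E(t)$ itself rather than needing the full dissipation.

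The main obstacle will be the careful bookkeeping of the mixed terms so that every residual quantity lands in one of the four permitted buckets with the right powers — in particular making sure the nonlinear $w$-terms, which a priori could generate $\|w_r\|^2$ with an $\va$-independent coefficient (which would not be absorbable since only $\va\|w_r\|^2$ sits on the left), only ever appear multiplied by $\va$ or by a factor of $E(t)$ so that $\va\|r^{(n-1)/2}w_r\|^2\le E(t)$ lets them be subsumed into $C_0 E^2(t)+C_0 E^3(t)$. Concretely, the term $-2\va\int_a^b r^{n-1} v^0 w_r\, w\,dr$ must be rewritten as $-\va\int_a^b r^{n-1} v^0 (w^2)_r\,dr = \va\int_a^b (r^{n-1}v^0)_r w^2\,dr - \va[r^{n-1}v^0 w^2]|_a^b$ to avoid an uncontrolled $w_r$; the boundary term here is $\va$ times boundary data and $w$ at the endpoints (expressible via $v^0$ and $\bar v_i$), feeding into $\va^2 F$ after Young. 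Once all terms are sorted, adding up and moving the absorbed dissipation pieces to the left yields exactly \eqref{i01}. I would keep the boundary term $2\va[r^{n-1}w_r w]|_a^b$ explicit on the right, as the statement does, deferring its treatment (via the boundary-layer ansatz and a trace estimate on $w_r$) to the subsequent lemmas.
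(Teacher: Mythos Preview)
Your overall strategy is correct and matches the paper's: test the $h$-equation against $2r^{n-1}h$, the $w$-equation against $2r^{n-1}w$, integrate by parts, and sort the remainder into the buckets $\va^2F$, $FE$, $E^2$, $E^3$, keeping the boundary term $2\va[r^{n-1}w_rw]|_a^b$ explicit. (There is no actual \emph{cancellation} between the linear coupling terms---the $u^0w$ piece from the $h$-equation and the $h_r$ piece from the $w$-equation do not match; both are simply absorbed into the $h_r$-dissipation via Cauchy--Schwarz, exactly as you end up describing.)

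One step, however, goes wrong. For the term $-4\va\int_a^b r^{n-1}v^0 w_r w\,dr$ you propose integrating by parts in $r$ to write it as $2\va\int_a^b (r^{n-1}v^0)_r w^2\,dr-2\va[r^{n-1}v^0 w^2]|_a^b$, claiming the boundary contribution ``feeds into $\va^2F$ after Young''. It does not: at $r=a,b$ one has $w=\bar v_i-v^0$, so $2\va[r^{n-1}v^0w^2]|_a^b$ is $\va$ times a quantity depending only on $\bar v_i$ and $\|v^0\|_{H^1}$, with no extra $\va$ or $E(t)$ factor available. This term is $O(\va F)$, not $O(\va^2 F)$, and it cannot be placed in any of the four allowed buckets; hence your argument would only yield the lemma with $C_0\va F(t)$ in place of $C_0\va^2F(t)$.

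The fix is that no integration by parts is needed here. Your worry about an ``uncontrolled $w_r$'' is unfounded because the term already carries an $\va$: a direct Cauchy--Schwarz gives
\[
-4\va\int_a^b r^{n-1}v^0 w_r w\,dr\ \le\ \va\|r^{(n-1)/2}w_r\|^2+C_0\va\|v^0\|_{H^1}^2\|r^{(n-1)/2}w\|^2,
\]
and the key observation (which is how the paper proceeds) is that $\va\|r^{(n-1)/2}w_r\|^2$ is \emph{itself} part of $E(t)$, hence $\le F(t)E(t)$ since $F\ge 1$. This places the term in the $FE$ bucket and preserves the full $2\va\|r^{(n-1)/2}w_r\|^2$ on the left, giving the lemma as stated.
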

\begin{proof}
 Testing the first equation of \eqref{e5} with $2r^{n-1}h$ in $L^2$ and using integration by parts, we get
\ben
\begin{split}
\frac{d}{dt}\|r^{(n-1)/2}h\|^2+2\|r^{(n-1)/2}h_r\|^2
&=-2\int_{a}^br^{n-1}hwh_r\,dr-2\int_a^b r^{n-1}(u^0w+hv^0)h_r\,dr\\
&:=J_1+J_2.
\end{split}
\enn
 The estimate of $J_1$ follows from \eqref{k12} and \eqref{k24}:
\ben
\begin{split}
J_1\leq &2 \|r^{(n-1)/2}h_r\| \|h\|_{L^\infty}
\|r^{(n-1)/2}w\|\\
\leq &C_0 \|r^{(n-1)/2}h_r\|^{\frac{3}{2}} \|r^{(n-1)/2}h\|^{\frac{1}{2}}
\|r^{(n-1)/2}w\|\\
\leq& \frac{1}{8}\|r^{(n-1)/2}h_r\|^2
+C_0 \|r^{(n-1)/2}h\|^2\|r^{(n-1)/2}w\|^4.
\end{split}
\enn
On the other hand, the Sobolev embedding inequality and Cauchy-Schwarz inequality entail that
\ben
J_2\leq \frac{1}{8}\|r^{(n-1)/2}h_r\|^2
+C_0\|u^0\|_{H^1}^2\|r^{(n-1)/2}w\|^2
+C_0\|v^0\|_{H^1}^2\|r^{(n-1)/2}h\|^2.
\enn
Collecting the above estimates for $J_1$ and $J_2$, we conclude that
\be\label{i1}
\begin{split}
\frac{d}{dt}\|r^{(n-1)/2}h(t)\|^2
+\frac{7}{4}\|r^{(n-1)/2}h_r(t)\|^2
\leq C_0 F(t)E(t) +C_0E^3(t).
\end{split}
\ee
We proceed by taking the $L^2$ inner product of the second equation of \eqref{e5} with $2r^{n-1}w$ and get
\ben
\begin{split}
\frac{d}{dt}&\|r^{(n-1)/2}w\|^2+2\va\|r^{(n-1)/2}w_r\|^2
+2(n-1)\va \|r^{(n-3)/2}w\|^2\\
=& \,2\va [r^{n-1}w_rw]|_a^b
-4\va \int_a^b r^{n-1}(ww_r+w v^0_r)w dr\\
&+2\int_a^b \big(r^{n-1}h_r + \va(r^{n-1}v^0_r)_r\big)w dr\\
&
-2\va \int_a^b \big(2r^{n-1}v^0w_r +2 r^{n-1}v^0v^0_r
+(n-1)r^{n-3}v^0\big)w dr\\
:=\,& 2\va [r^{n-1}w_rw]|_a^b+J_3+J_4+J_5.
\end{split}
\enn
First Sobolev embedding inequality and \eqref{k24} yield
\ben
\begin{split}
J_3\leq &4\va \|w\|_{L^\infty}\|r^{(n-1)/2}w_r\|\|r^{(n-1)/2}w\|
+ 4\va \|v^0_r\|_{L^\infty}\|r^{(n-1)/2}w\|^2\\
\leq& C_0\va \big(\|r^{(n-1)/2}w_r\|+\|r^{(n-1)/2}w\|\big)
\|r^{(n-1)/2}w_r\|\|r^{(n-1)/2}w\|
+C_0\va \|v^0\|_{H^2}\|r^{(n-1)/2}w\|^2.
\end{split}
\enn
It follows from Cauchy-Schwarz inequality and \eqref{k24} that
\ben
J_4\leq \frac{1}{4}\|r^{(n-1)/2}h_r\|^2
+C_0\|r^{(n-1)/2}w\|^2+C_0\va^2 \|v^0\|_{H^2}^2.
\enn
 Moreover Sobolev embedding inequality, Cauchy-Schwarz inequality and \eqref{k24} lead to
\ben
J_5\leq \va \|r^{(n-1)/2}w_r\|^2
+C_0\va \|v^0\|_{H^1}^2\|r^{(n-1)/2}w\|^2
+\|r^{(n-1)/2}w\|^2
+C_0\va^2\|v^0\|_{H^2}^4+C_0\va^2\|v^0\|^2.
\enn
Collecting the above estimates for $J_3$-$J_5$ and recalling that $0<\va<1$, we end up with
\ben
\begin{split}
\frac{d}{dt}&\|r^{(n-1)/2}w(t)\|^2+2\va\|r^{(n-1)/2}w_r(t)\|^2
+2(n-1)\va \|r^{(n-3)/2}w(t)\|^2\\
\leq&2\va [r^{n-1}w_rw]|_a^b+ \frac{1}{4}\|r^{(n-1)/2}h_r(t)\|^2
+C_0\va^2 F(t)
+C_0F(t)E(t)+C_0E^2(t),
\end{split}
\enn
which, adding to \eqref{i1} gives the desired estimate \eqref{i01}. The proof is completed.

\end{proof}

We turn to estimate the derivative of $w$ and the boundary term in \eqref{i01}.
\begin{lemma}\label{l2} Let $0<t<\infty$. Then there exists a constant $C_0$ independent of $\va$ and $t$, such that
\be\label{i02}
\begin{split}
\frac{d}{dt}&\big(\va \|r^{(n-1)/2}w_r(t)\|^2\big)+\frac{1}{2}
\|r^{(n-1)/2}w_t(t)\|^2\\
&\leq C_0 \va^2 F(t)+C_0F(t)E(t)+C_0 E^2(t)
+\|r^{(n-1)/2}h_r(t)\|^2
+2\va[r^{n-1}w_rw_t]|_a^b
\end{split}
\ee
and
\be\label{i03}
\begin{split}
2\va&[r^{n-1}w_rw]|_a^b+2\va[r^{n-1}w_rw_t]|_a^b\\
&\leq C_0 \va^{1/2}F(t)
+C_0 F(t)E(t)+C_0 E^2(t)
+\frac{1}{4}\|r^{(n-1)/2}w_t(t)\|^2
+\frac{1}{4}\|r^{(n-1)/2}h_r(t)\|^2.
\end{split}
\ee
\end{lemma}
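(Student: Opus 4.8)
The plan is to prove \eqref{i02} by an energy estimate for $w$ with the multiplier $2r^{n-1}w_t$, and to prove \eqref{i03} by converting the boundary fluxes of $w_r$, for which no a priori bound is available, into interior integrals via an explicit lifting of the (known) boundary data. Throughout I would use the weighted $L^2$-equivalence \eqref{k24}, the embedding $H^1(a,b)\hookrightarrow C[a,b]$ (resp. $H^2\hookrightarrow C^1$), and the elementary consequences of \eqref{j14} and of the definition of $E(t)$: namely $\va\|r^{(n-1)/2}w_r\|^2\le E$, $\|w\|^2\le C_0E$, $\|w\|_{H^1}^2\le C_0\va^{-1}E$, and $\|v^0\|_{H^2}^m\le C_0F$ for $m\in\{1,2,4\}$.

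For \eqref{i02}: multiply the second equation of \eqref{e5} by $2r^{n-1}w_t$ and integrate over $(a,b)$. Integration by parts in $2\va\int_a^b(r^{n-1}w_r)_rw_t\,dr$ yields $-\frac{d}{dt}(\va\|r^{(n-1)/2}w_r\|^2)+2\va[r^{n-1}w_rw_t]|_a^b$, while the left-hand side contributes $2\|r^{(n-1)/2}w_t\|^2$. Each remaining term ($\va ww_rw_t$, $h_rw_t$, $\va(r^{n-1}v^0_r)_rw_t$, $\va(wv^0_r+v^0w_r+v^0v^0_r)w_t$, $\va r^{-2}(w+v^0)w_t$) is estimated by Cauchy--Schwarz so as to absorb a small multiple of $\|r^{(n-1)/2}w_t\|^2$; the only exception is $2\int_a^b r^{n-1}h_rw_t\,dr\le\|r^{(n-1)/2}h_r\|^2+\|r^{(n-1)/2}w_t\|^2$, which is the origin of the term $\|r^{(n-1)/2}h_r(t)\|^2$ on the right of \eqref{i02}. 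Using the facts listed above, the quadratic-in-$v^0$ terms produce the factor $\va^2F$, the bilinear ones produce $FE$, and the ones involving $\|w\|_{H^1}$ (together with $\va^2\|w_r\|^4=(\va\|w_r\|^2)^2\le C_0E^2$) produce $E^2$; choosing the absorbing constants small enough leaves a coefficient $\ge\tfrac12$ in front of $\|r^{(n-1)/2}w_t\|^2$, which is \eqref{i02}.

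For \eqref{i03}: since $v^0_t=u^0_r$, the boundary conditions in \eqref{e5} give $w(a,t)=\bar v_1-v^0(a,t)$, $w(b,t)=\bar v_2-v^0(b,t)$, $w_t(a,t)=-u^0_r(a,t)$, $w_t(b,t)=-u^0_r(b,t)$, whence $|w(a,t)|^2+|w(b,t)|^2+|w_t(a,t)|^2+|w_t(b,t)|^2\le C_0F(t)$. Let $\psi=\psi(r,t)$ be affine in $r$ with $\psi(a,t)=(w+w_t)(a,t)$ and $\psi(b,t)=(w+w_t)(b,t)$; then $\|\psi(t)\|_{H^1}^2\le C_0F(t)$ with $\psi_r$ constant in $r$. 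Write the sum of the two boundary terms as $2\va[r^{n-1}w_r\psi]|_a^b$ and integrate by parts to get $2\va\int_a^b(r^{n-1}w_r)_r\psi\,dr+2\va\int_a^br^{n-1}w_r\psi_r\,dr$; then substitute for $\va(r^{n-1}w_r)_r$ from the second equation of \eqref{e5} multiplied by $r^{n-1}$, namely
\begin{align*}
\va(r^{n-1}w_r)_r={}&r^{n-1}w_t+2\va r^{n-1}ww_r-r^{n-1}h_r-\va(r^{n-1}v^0_r)_r\\
&+2\va r^{n-1}(wv^0_r+v^0w_r+v^0v^0_r)+\va(n-1)r^{n-3}(w+v^0).
\end{align*}
In each resulting interior integral the factor $w_r$ occurs either multiplied by $\va$ --- where $\va\|r^{(n-1)/2}w_r\|=\va^{1/2}\bigl(\va^{1/2}\|r^{(n-1)/2}w_r\|\bigr)\le\va^{1/2}E^{1/2}$ --- or paired with $\psi_r$, so a spare factor $\va^{1/2}$ (or, after one further interpolation, $\va^{1/4}$) is always present; together with $\|\psi(t)\|_{H^1}\le C_0F^{1/2}(t)$, the inequality $E^{1/2}\le\tfrac12(1+E)$, and Young's inequality, every such term is controlled by $C_0\va^{1/2}F+C_0FE+C_0E^2$. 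The two exceptions $2\int_a^b r^{n-1}w_t\psi\,dr\le\tfrac14\|r^{(n-1)/2}w_t\|^2+C_0F$ and $2\int_a^b r^{n-1}h_r\psi\,dr\le\tfrac14\|r^{(n-1)/2}h_r\|^2+C_0F$ supply the two absorbing terms on the right of \eqref{i03}, which proves \eqref{i03}.

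The main obstacle is the boundary term in \eqref{i03}: a trace inequality for $w_r$ is useless, since no $\va$-uniform bound on $\|w_{rr}\|$ is at hand and the $L^2$-in-space bound on $w_r$ only costs $\va^{-1/2}$. The remedy --- replacing $\va[r^{n-1}w_r\,\cdot\,]|_a^b$ by an interior integral via the lifting $\psi$ and then eliminating $(r^{n-1}w_r)_r$ through the equation --- does the job, but only at the weaker rate $\va^{1/2}$ (versus $\va^2$ in the purely interior estimate \eqref{i02}); this weaker rate is precisely what the Gronwall-type Lemma \ref{l0} is built to absorb. A secondary, purely bookkeeping nuisance is tracking the various weights $r^{(n-1)/2},r^{(n-3)/2},\dots$, all of which are mutually comparable on $[a,b]$ with $0<a<b$ by \eqref{k24}.
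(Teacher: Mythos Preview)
Your treatment of \eqref{i02} is correct and coincides with the paper's: test the $w$-equation with $2r^{n-1}w_t$, integrate by parts, and absorb each source term by Cauchy--Schwarz and Sobolev embedding.

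Your argument for \eqref{i03} has a real gap. After the lifting and the substitution $\va(r^{n-1}w_r)_r=r^{n-1}w_t-r^{n-1}h_r+\va(\cdots)$, the two ``exceptions'' $2\int_a^b r^{n-1}w_t\,\psi\,dr$ and $-2\int_a^b r^{n-1}h_r\,\psi\,dr$ carry \emph{no} power of $\va$; with an affine $\psi$ you have only $\|\psi\|^2\le C_0F$, so these terms contribute $\tfrac14\|r^{(n-1)/2}w_t\|^2+\tfrac14\|r^{(n-1)/2}h_r\|^2+C_0F$, not $C_0\va^{1/2}F$ as \eqref{i03} requires. This is not cosmetic: feeding $C_0F$ in place of $C_0\va^{1/2}F$ into the combined inequality \eqref{i5} kills the smallness of the forcing in Lemma~\ref{l0}, and with the superlinear terms $E^2,E^3$ present one cannot even conclude that $E$ stays bounded on $[0,T]$. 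The paper obtains the missing $\va^{1/2}$ by a different mechanism: it bounds the boundary flux through $\va\|w_r\|_{L^\infty}\le C_0\va(\|w_r\|^{1/2}\|w_{rr}\|^{1/2}+\|w_r\|)$ and then controls $\va^2\|w_{rr}\|^2$ by squaring the $w$-equation (this is \eqref{i4}); the Gagliardo--Nirenberg interpolation is exactly what manufactures the extra half-power of $\va$, after which a small multiple $\eta\va^2\|w_{rr}\|^2$ is absorbed into $\tfrac14\|w_t\|^2+\tfrac14\|h_r\|^2$. If you wish to salvage the lifting route, the affine choice must be replaced by a boundary-layer profile with $\|\psi\|^2=O(\va^{1/2}F)$, for instance $\psi(r,t)=(w+w_t)(a,t)\,e^{-(r-a)/\sqrt\va}+(w+w_t)(b,t)\,e^{-(b-r)/\sqrt\va}$, for which both $\|\psi\|^2$ and $\va\|\psi_r\|^2$ are $O(\va^{1/2}F)$; with that choice your scheme does go through.
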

\begin{proof}
Taking the $L^2$ inner product of the second equation of \eqref{e5} with $2r^{n-1}w_t$, then using integration by parts we have
\ben
\begin{split}
\frac{d}{dt}&\va \|r^{(n-1)/2}w_r(t)\|^2+2
\|r^{(n-1)/2}w_t(t)\|^2\\
=&2\va[r^{n-1}w_rw_t]|_a^b-4\va\int_a^b r^{n-1} ww_r w_t dr
-4\va \int_a^b r^{n-1}(wv^0_r +v^0 w_r+v^0 v^0_r) w_t dr\\
&+2 \int_a^b \big[r^{n-1}h_r+\va(r^{n-1}v^0_r)_r
-\va(n-1)r^{n-3}w
-\va(n-1)r^{n-3}v_0 \big]w_t dr\\
:=&2\va[r^{n-1}w_rw_t]|_a^b+J_6+J_7+J_8.
\end{split}
\enn
We first employ Sobolev embedding inequality and \eqref{k24} to deduce that
\ben
\begin{split}
J_6\leq& C_0\va\|w\|_{H^1}\|r^{(n-1)/2}w_r\|  \|r^{(n-1)/2}w_t\|\\
\leq &\frac{1}{8}\|r^{(n-1)/2}w_t\|^2
+C_0\va^2\big(\|r^{(n-1)/2}w_r\|^2
+\|r^{(n-1)/2}w\|^2
\big)\|r^{(n-1)/2}w_r\|^2
\end{split}
\enn
and that
\ben
J_7\leq \frac{1}{8}\|r^{(n-1)/2}w_t\|^2
+C_0\va^2\big( \|v^0\|_{H^2}^2\|r^{(n-1)/2}w\|^2
+\|v^0\|_{H^1}^2\|r^{(n-1)/2}w_r\|^2
+ \|v^0\|_{H^1}^4\big).
\enn
Moreover Cauchy-Schwarz inequality and \eqref{k24} entail that
\ben
J_8
\leq \frac{5}{4}\|r^{(n-1)/2}w_t\|^2+\|r^{(n-1)/2}h_r\|^2
+C_0\va^2(
\|v^0\|_{H^2}^2+\|r^{(n-1)/2}w\|^2).
\enn
Then \eqref{i02} follows from the above estimates on $J_6$-$J_8$. It remains to prove \eqref{i03}.
By the definition of $w$ and Gagliardo-Nirenberg interpolation inequality, one deduces that
\be\label{i2}
\begin{split}
2\va[r^{n-1}w_rw]|_a^b\leq &C_0\va \|w_r\|_{L^\infty}(|\bar{v}_1|+|\bar{v}_2|
+\|v^0\|_{L^\infty})\\
\leq& C_0 \va (\|w_r\|^{\frac{1}{2}}
\|w_{rr}\|^{\frac{1}{2}}
+\|w_r\|)(|\bar{v}_1|+|\bar{v}_2|
+\|v^0\|_{H^1})\\
\leq &\eta \va^2 \|w_{rr}\|^2+C_0(1+1/\eta)\va \|w_{r}\|^2+C_0(\va^{1/2}+\va)
(|\bar{v}_1|+|\bar{v}_2|
+\|v^0\|_{H^1})^2,
\end{split}
\ee
where $\eta>0$ is a small constant to be determined. By a similar argument as deriving \eqref{i2} and the second equation of \eqref{e3} with $\va=0$, we further get
\be\label{i3}
\begin{split}
2\va[r^{n-1}w_rw_t]|_a^b
\leq &\eta \va^2 \|w_{rr}\|^2+C_0(1+1/\eta)\va \|w_{r}\|^2+C_0(\va^{1/2}+\va)
\|v^0_{t}\|_{H^1}^2\\
\leq &\eta \va^2 \|w_{rr}\|^2+C_0(1+1/\eta)\va \|w_{r}\|^2+C_0(\va^{1/2}+\va)
\|u^0\|_{H^2}^2.
\end{split}
\ee
To bound the term $\|w_{rr}\|^2$ in \eqref{i2} and \eqref{i3}, we use the second equation of \eqref{e5}, Sobolev embedding inequality and \eqref{k24} and derive
\be\label{i4}
\begin{split}
\va^2 \|w_{rr}\|^2\leq C_1&
\big(\|r^{(n-1)/2}w_t\|^2+\|r^{(n-1)/2}h_r\|^2
+\va^2 \|r^{(n-1)/2}w_r\|^2
+\va^2\|r^{(n-1)/2}w\|^2
\|v^0\|^2_{H^2}
\\
&+\va^2 \|r^{(n-1)/2}w_r\|^4
+\va^2 \|r^{(n-1)/2}w\|^2\|r^{(n-1)/2}w_r\|^2
+\va^2\|v^0\|^2_{H^2}\\
&+\va^2\|r^{(n-1)/2}w_r\|^2
\|v^0\|^2_{H^1}
+\va^2 \|v^0\|^4_{H^1}
+\va^2 \|r^{(n-1)/2}w\|^2\big),
\end{split}
\ee
where we have used the notation $C_1$ to distinguish it from the constant $C_0$ in \eqref{i2} and \eqref{i3}.
Finally feeding \eqref{i2} and \eqref{i3} on \eqref{i4} then adding the results, we obtain \eqref{i03} by taking $\eta$ small enough such that $C_1\eta<\frac{1}{8}$ and by using $0<\va<1$. The proof is completed.

\end{proof}

 We next apply Lemma \ref{l0} to the combination of Lemma \ref{l1} and Lemma \ref{l2} to obtain the following result.
\begin{lemma}\label{l3}  Let $0<T<\infty$ and $\va\in (0,\va_0]$ with $\va_0$ defined in Theorem \ref{t1}. Then there exists a constant $C$ independent of $\va$, depending on $T$ such that
\be\label{i04}
\begin{split}
\|&h\|_{L^\infty(0,T;L^2)}^2+
\|w\|_{L^\infty(0,T;L^2)}^2\\ &+
\va \|w_r\|_{L^\infty(0,T;L^2)}^2+
\|h_r\|_{L^2(0,T;L^2)}^2+
\|w_t\|_{L^2(0,T;L^2)}^2
\leq C\va^{\frac{1}{2}}
\end{split}
\ee
and
\be\label{jj4}
\|w_{rr}\|_{L^2(0,T;L^2)}\leq C\va^{-3/4}.
\ee
\end{lemma}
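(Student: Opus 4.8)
The strategy is to combine the differential inequalities \eqref{i01} and \eqref{i02}, absorb the boundary terms using \eqref{i03}, and then recognize the resulting scalar inequality for $E(t)$ as an instance of the ODE \eqref{e0} with small parameter $\gamma=\va^{1/2}$ and $k=3$, so that Lemma \ref{l0} yields the bound $E(t)\le C\va^{1/2}$. First I would add \eqref{i01} and \eqref{i02}. On the left side this produces $\frac{d}{dt}E(t)$ together with the dissipation terms $\frac{3}{2}\|r^{(n-1)/2}h_r\|^2$, $2\va\|r^{(n-1)/2}w_r\|^2$, $2(n-1)\va\|r^{(n-3)/2}w\|^2$ and $\frac12\|r^{(n-1)/2}w_t\|^2$; on the right side the boundary contribution is exactly $2\va[r^{n-1}w_rw]|_a^b+2\va[r^{n-1}w_rw_t]|_a^b$, which \eqref{i03} bounds by $C_0\va^{1/2}F(t)+C_0F(t)E(t)+C_0E^2(t)+\frac14\|r^{(n-1)/2}w_t\|^2+\frac14\|r^{(n-1)/2}h_r\|^2$. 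The two fractions $\frac14\|r^{(n-1)/2}w_t\|^2$ and $\frac14\|r^{(n-1)/2}h_r\|^2$ are absorbed into the corresponding dissipation terms on the left (note $\eqref{i02}$ already contributed a $+\|r^{(n-1)/2}h_r\|^2$ on its right side, so one must check the $h_r$ coefficients still leave a nonnegative remainder — they do, since $\tfrac32-1-\tfrac14>0$). Discarding the remaining nonnegative dissipation, one is left with
\be\label{plan1}
\frac{d}{dt}E(t)\leq C_0\va^{2}F(t)+C_0\va^{1/2}F(t)+C_0F(t)E(t)+C_0E^2(t)+C_0E^3(t),\qquad E(0)=0,
\ee
and since $0<\va<1$ the two inhomogeneous terms combine into $C_0\va^{1/2}F(t)$.

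Next I would invoke Lemma \ref{l0} with $k=3$, $\gamma=\va^{1/2}$, $f_1(t)=C_0F(t)$ and $f_2(t)=C_0F(t)$. By Theorem \ref{p1} the function $F(t)$, built from $\|u^0(t)\|_{H^2}$ and $\|v^0(t)\|_{H^2}$ as in \eqref{j14}, is continuous and satisfies $\int_0^T F(t)\,dt<\infty$, so $\gamma_0$ from \eqref{j1} is a positive constant depending only on $T$ (and $C_0$), and a direct comparison shows it coincides — up to the harmless absolute constants — with the threshold $\va_0$ declared in Theorem \ref{t1}; thus $\va\in(0,\va_0]$ guarantees $\gamma=\va^{1/2}\le\gamma_0$. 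Lemma \ref{l0} then gives $E(t)\le e^{\int_0^T f_2}\cdot 12(k-1)\gamma\int_0^T f_1(t)\,dt\le C\va^{1/2}$ for $t\in[0,T]$, where $C$ depends on $T$ but not $\va$. Unwinding the definition of $E(t)$ yields $\|r^{(n-1)/2}h(t)\|^2+\|r^{(n-1)/2}w(t)\|^2+\va\|r^{(n-1)/2}w_r(t)\|^2\le C\va^{1/2}$, and since the weight $r^{n-1}$ is bounded above and below on $[a,b]$ this is equivalent to the $L^2$ and $\va$-weighted-$L^2$ bounds in \eqref{i04}. To recover the space-time norms $\|h_r\|_{L^2(0,T;L^2)}^2$ and $\|w_t\|_{L^2(0,T;L^2)}^2$, I would return to \eqref{i01}+\eqref{i02} (before discarding dissipation), integrate over $(0,T)$, use $E(0)=0$ and the just-obtained bound $E(t)\le C\va^{1/2}$ together with $\int_0^T F\,dt<\infty$ to control every term on the right by $C\va^{1/2}$; the dissipation integrals $\int_0^T\|r^{(n-1)/2}h_r\|^2$ and $\int_0^T\|r^{(n-1)/2}w_t\|^2$ on the left are then bounded by $C\va^{1/2}$, which after removing the weight gives the remaining two terms of \eqref{i04}.

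Finally, for \eqref{jj4} I would feed the bound \eqref{i04} into the elliptic-type estimate \eqref{i4}, which expresses $\va^2\|w_{rr}\|^2$ in terms of $\|r^{(n-1)/2}w_t\|^2$, $\|r^{(n-1)/2}h_r\|^2$, $\va^2\|r^{(n-1)/2}w_r\|^2$, the quantity $E(t)$ and its powers (all multiplied by $\va^2$ or by bounded Sobolev norms of $v^0$), plus lower-order terms. Integrating \eqref{i4} over $(0,T)$ and using \eqref{i04}: $\int_0^T\|r^{(n-1)/2}w_t\|^2+\int_0^T\|r^{(n-1)/2}h_r\|^2\le C\va^{1/2}$, while $\va^2\int_0^T\|r^{(n-1)/2}w_r\|^2\le \va\cdot C\va^{1/2}\cdot\va^{1/2}=C\va^2$ and similarly every other term is at most $C\va^{1/2}$ (the dominant contribution). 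Hence $\va^2\int_0^T\|w_{rr}\|^2\le C\va^{1/2}$, i.e. $\int_0^T\|w_{rr}\|^2\le C\va^{-3/2}$ and $\|w_{rr}\|_{L^2(0,T;L^2)}\le C\va^{-3/4}$. The one point needing care — and the main obstacle — is the matching between the abstract threshold $\gamma_0$ of Lemma \ref{l0} and the explicit $\va_0$ in Theorem \ref{t1}: one must check that with $f_1=f_2=C_0F$ and $k=3$ the quantity $G=C_0(e^{\int_0^T C_0F})^{2}$ and the two expressions in \eqref{j1} reduce, after absorbing universal constants into $C_0$, to the two expressions defining $\va_0$; this is a bookkeeping verification but it is what licenses the application of Lemma \ref{l0} on the full interval $[0,T]$.
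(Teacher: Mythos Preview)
Your proposal is correct and follows the paper's proof essentially line by line: you add \eqref{i01}, \eqref{i02} and \eqref{i03} to obtain the scalar differential inequality \eqref{i5} for $E(t)$, apply Lemma~\ref{l0} with $k=3$, $\gamma=\va^{1/2}$, $f_1=f_2=C_0F$, then integrate the inequality in time to recover the dissipation norms, and finally read off \eqref{jj4} from the second equation of \eqref{e5} (the paper does this directly from the equation rather than via \eqref{i4}, but these are the same computation). Your caution about matching $\gamma_0$ with the explicit $\va_0$ is exactly the point the paper leaves implicit, and it is indeed just bookkeeping once one remembers that $C_0$ is generic.
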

\begin{proof}
We first add \eqref{i02} and \eqref{i03} to \eqref{i01} and find
\be\label{i5}
\begin{split}
\frac{d}{dt}&E(t)
+\frac{1}{4}\|r^{(n-1)/2}h_r(t)\|^2
+2(n-1)\va \|r^{(n-3)/2}w(t)\|^2
+\frac{1}{4} \|r^{(n-1)/2}w_t(t)\|^2\\
&\leq C_0\va^{\frac{1}{2}}F(t)+C_0F(t)E(t)
+C_0E^2(t)+C_0E^3(t),
\end{split}
\ee
where $0<\va<1$ has been used. Then we apply lemma \ref{l0} to \eqref{i5} by taking $k=3,\,\gamma=\va^{1/2}$ and $f_1(t)=f_2(t)=C_0F(t)$ to conclude for $\va\in (0,\va_0]$ that
\be\label{j12}
\|h\|_{L^\infty(0,T;L^2)}^2
+\|w\|_{L^\infty(0,T;L^2)}^2
+\va \|w_r\|_{L^\infty(0,T;L^2)}^2
\leq \big(
C_0e^{C_0\int_{0}^{T}F(t)\,dt}\int_{0}^{T}F(t)dt
\big)\va^{\frac{1}{2}},
\ee
where \eqref{k24} has been used.
Then we integrate \eqref{i5} over (0,T) and applying \eqref{j12} to the result to deduce that
\ben
 \|h\|_{L^\infty(0,T;L^2)}^2+
\|w\|_{L^\infty(0,T;L^2)}^2+
\va \|w_r\|_{L^\infty(0,T;L^2)}^2+
\|h_r\|_{L^2(0,T;L^2)}^2+
\|w_t\|_{L^2(0,T;L^2)}^2
\leq C\va^{\frac{1}{2}},
 \enn
  where the constant $C$ depending on $T$ and $\int_0^{T}F(t)dt$ is finite thanks to Theorem \ref{p1}. We thus derive \eqref{i04} and proceed to prove \eqref{jj4}. Indeed, it follows from the second equation of \eqref{e5}, Sobolev embedding inequality and \eqref{i04} that
  \ben
  \begin{aligned}
  \va\|w_{rr}\|_{L^2(0,T;L^2)}
  \leq& C\left(
  \|w_t\|_{L^2(0,T;L^2)}+\va\|w_r\|_{L^2(0,T;L^2)}
  +\va \|w\|_{L^\infty(0,T;H^1)}\|w_r\|_{L^2(0,T;L^2)}
  \right)\\
  &+C\left(\|h_r\|_{L^2(0,T;L^2)}
  +\va \|v^0\|_{L^2(0,T;H^2)}
  +\va \|w_r\|_{L^2(0,T;L^2)}\|v^0\|_{L^\infty(0,T;H^2)}
  \right)\\
  &+C\left(\va\|v^0\|_{L^2(0,T;H^2)}
  +\va\|w\|_{L^2(0,T;L^2)}
  +\va \|v^0\|_{L^2(0,T;L^2)}
  \right)\\
  \leq& C\va^{1/4}.
  \end{aligned}
  \enn
  We thus derive \eqref{jj4} and the proof is completed.

\end{proof}

Higher regularity estimates for the solution component $h$ is given in the following lemma.
\begin{lemma}\label{l4}
 Suppose $0<T<\infty$ and $\va\in (0,\va_0]$. Then there is a constant $C$ independent of $\va$, depending on $T$ such that
\be\label{i05}
\|h_{r}\|_{L^\infty(0,T;L^2)}^2
+\|h_{t}\|_{L^\infty(0,T;L^2)}^2
+\|h_{rt}\|_{L^2(0,T;L^2)}^2
\leq C\va^{1/2}.
\ee
\end{lemma}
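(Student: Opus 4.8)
\textbf{Proof proposal for Lemma \ref{l4}.}

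The plan is to close the energy estimates for $h$ at one higher level of regularity than Lemma \ref{l1}, using as a black box the $L^2$-bounds $\|h_r\|_{L^2(0,T;L^2)}^2+\|w_t\|_{L^2(0,T;L^2)}^2+\va\|w_r\|_{L^\infty(0,T;L^2)}^2\leq C\va^{1/2}$ and $\|w_{rr}\|_{L^2(0,T;L^2)}\leq C\va^{-3/4}$ from Lemma \ref{l3}, together with the uniform-in-$\va$ bounds on $(u^0,v^0)$ from Theorem \ref{p1}. First I would differentiate the first equation of \eqref{e5} in $t$ and test with $2r^{n-1}h_t$; after integration by parts this produces $\frac{d}{dt}\|r^{(n-1)/2}h_t\|^2+2\|r^{(n-1)/2}h_{rt}\|^2$ on the left, and on the right the time-derivatives of the three nonlinear flux terms $(r^{n-1}hw)_r$, $(r^{n-1}u^0w)_r$, $(r^{n-1}hv^0)_r$ paired against $h_{rt}$. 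Each such term, after moving the $r$-derivative onto $h_{rt}$, is a sum of products of one factor among $\{h_t,w_t,h,w,u^0,v^0,u^0_t,v^0_t\}$ and another among $\{h,w,h_r,w_r,u^0_r,v^0_r\}$, times $h_{rt}$; I would control $L^\infty$ norms of the low-order factors by $\|f\|_{L^\infty}\le C_0\|f\|^{1/2}\|f_r\|^{1/2}$ or by Sobolev embedding \eqref{k12}, absorb a small multiple of $\|r^{(n-1)/2}h_{rt}\|^2$ on the left, and be left with a Gronwall-type inequality $\frac{d}{dt}\|r^{(n-1)/2}h_t\|^2+\|r^{(n-1)/2}h_{rt}\|^2\le G_1(t)\|r^{(n-1)/2}h_t\|^2+G_2(t)$ where $G_1$ depends only on $\|u^0\|_{H^2},\|v^0\|_{H^2}$ and $G_2$ is a sum of terms carrying at least one factor from $\{\|h_r\|^2,\|w_t\|^2,\va\|w_r\|^2,\ldots\}$, hence integrating in time and using Lemma \ref{l3} gives $G_2\in L^1(0,T)$ with $\int_0^T G_2\le C\va^{1/2}$.

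The only subtlety at the differentiated-in-$t$ level is the initial datum: since $h(\cdot,0)=w(\cdot,0)=0$, evaluating the first equation of \eqref{e5} at $t=0$ gives $h_t(\cdot,0)=0$, so the Gronwall argument starts from zero and yields directly $\|h_t\|_{L^\infty(0,T;L^2)}^2+\|h_{rt}\|_{L^2(0,T;L^2)}^2\le C\va^{1/2}$. To then recover $\|h_r\|_{L^\infty(0,T;L^2)}^2\le C\va^{1/2}$ I would not integrate a differential inequality but instead read $h_r$ off elliptically: from the first equation of \eqref{e5}, $(r^{n-1}h_r)_r=r^{n-1}h_t-(r^{n-1}hw)_r-(r^{n-1}u^0w)_r-(r^{n-1}hv^0)_r$, so for fixed $t$, $\|(r^{n-1}h_r)_r\|$ is bounded by $\|h_t\|$ plus products of the already-controlled quantities; combined with the Dirichlet condition $h|_{r=a,b}=0$ and an inequality of type \eqref{jj1}, this gives $\|h_r(t)\|^2\le C_0(\|(r^{n-1}h_r)_r(t)\|^2+\|h(t)\|^2)$ pointwise in $t$, and taking the supremum over $[0,T]$ with Lemma \ref{l3} and the just-established bound on $\|h_t\|_{L^\infty(0,T;L^2)}$ closes \eqref{i05}.

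I expect the main obstacle to be bookkeeping rather than conceptual: one must check that \emph{every} term arising in $\frac{d}{dt}(r^{n-1}hw)$, $\frac{d}{dt}(r^{n-1}u^0w)$, $\frac{d}{dt}(r^{n-1}hv^0)$ — in particular the ones containing $w_t$ and $v^0_t$ — ends up, after applying \eqref{k12} and Young's inequality, multiplied by at least one factor that Lemma \ref{l3} shows is $O(\va^{1/2})$ in the relevant $L^1_t$ or $L^\infty_t$ norm, so that no term of size $O(1)$ or worse (e.g.\ a stray $\va\|w_{rr}\|^2$, which is only $O(\va^{-1/2})$) survives in $G_2$. The delicate cases are those with $w_t$: there one uses $\|h\|_{L^\infty}\le C_0\|h\|^{1/2}\|h_r\|^{1/2}$ (both factors small) or $\|u^0\|_{L^\infty}\le C_0\|u^0\|_{H^1}$ (bounded) against $\|w_t\|$ (square-integrable in $t$ and $O(\va^{1/4})$ there), and never needs $w_{rr}$ at all — the $t$-differentiated equation for $h$ involves $w$ only up to first spatial derivative, which is why the argument closes with exactly the information from Lemma \ref{l3}.
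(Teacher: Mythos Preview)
Your two-step plan (first close a Gronwall inequality for $\|h_t\|^2$ alone, then recover $\|h_r\|_{L^\infty_t L^2}$ elliptically) has a genuine gap at both steps, and the gap is exactly the reason the paper couples the two estimates instead of separating them.

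In the differentiated-in-$t$ estimate, the term coming from $(hw)_t$ contains $\int_a^b r^{n-1}\,h\,w_t\,h_{rt}\,dr$. However you split it (Poincar\'e $\|h\|_{L^\infty}\le C_0\|h_r\|$ or the interpolation $\|h\|_{L^\infty}\le C_0\|h\|^{1/2}\|h_r\|^{1/2}$), after Young you are left with a contribution of the type $\|h_r\|^2\|w_t\|^2$ (or $\|h\|\|h_r\|\|w_t\|^2$) in your forcing $G_2$. Lemma~\ref{l3} gives $\|h_r\|^2\in L^1(0,T)$ and $\|w_t\|^2\in L^1(0,T)$, each of size $O(\va^{1/2})$, but \emph{neither} is in $L^\infty(0,T)$; the product of two $L^1_t$ functions is not $L^1_t$, so $\int_0^T\|h_r\|^2\|w_t\|^2\,dt$ cannot be bounded by $C\va^{1/2}$ from Lemma~\ref{l3} alone. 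Your claim ``$G_2$ carries at least one small factor, hence $\int_0^T G_2\le C\va^{1/2}$'' silently assumes the \emph{other} factor sits in $L^\infty_t$, which fails precisely here.

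The elliptic recovery is also too optimistic. From \eqref{e5} you get $(r^{n-1}h_r)_r=r^{n-1}h_t-(r^{n-1}hw)_r-(r^{n-1}u^0 w)_r-(r^{n-1}hv^0)_r$, and the piece $(r^{n-1}u^0 w)_r$ contains $u^0 w_r$. Since Lemma~\ref{l3} only gives $\va\|w_r\|_{L^\infty(0,T;L^2)}^2\le C\va^{1/2}$, i.e.\ $\|w_r\|_{L^\infty(0,T;L^2)}\le C\va^{-1/4}$, one has $\|(r^{n-1}h_r)_r\|_{L^\infty_t L^2}=O(\va^{-1/4})$ at best. Feeding this into $\|h_r\|^2\le C_0\|h\|\,\|(r^{n-1}h_r)_r\|$ with $\|h\|_{L^\infty_t L^2}\le C\va^{1/4}$ yields only $\|h_r\|_{L^\infty_t L^2}\le C$, not $C\va^{1/4}$.

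The paper's remedy is minimal: in addition to differentiating in $t$ and testing with $2r^{n-1}h_t$ (your step), it \emph{also} tests the undifferentiated first equation of \eqref{e5} with $2r^{n-1}h_t$, producing $\frac{d}{dt}\|r^{(n-1)/2}h_r\|^2+2\|r^{(n-1)/2}h_t\|^2$ on the left. Adding the two differential inequalities puts $\|h_r\|^2+\|h_t\|^2$ together in the Gronwall quantity; the troublesome term $\|h_r\|^2\|w_t\|^2$ is then a Gronwall coefficient $\|w_t\|^2\in L^1(0,T)$ multiplying the unknown $\|h_r\|^2$, and Gronwall closes at once without any elliptic step.
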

\begin{proof}
We first take the $L^2$ inner product of the first equation of \eqref{e5} with $2r^{n-1}h_t$ and use integration by parts to get
\be\label{i6}
\begin{split}
\frac{d}{dt}&\|r^{(n-1)/2}h_r\|^2
+2\|r^{(n-1)/2}h_t\|^2\\
=&-2\int_a^b r^{n-1}(hw +u^0 w +hv^0 )h_{rt}dr\\
\leq & \,\frac{1}{2}\|r^{(n-1)/2}h_{rt}\|^2
+C_0(\|h_r\|^2\|w\|^2+\|u^0\|_{H^1}^2\|w\|^2
+\|h_r\|^2\|v^0\|^2).
\end{split}
\ee
Then differentiating the first equation of \eqref{e5} with respect to $t$ and multiplying the resulting equation with $2r^{n-1}h_t$ in $L^2$, we derive
\ben
\begin{split}
\frac{d}{dt}&\|r^{(n-1)/2}h_t\|^2
+2\|r^{(n-1)/2}h_{rt}\|^2\\
=&-2\int_a^b r^{n-1}h_t w h_{rt}dr
-2\int_a^b r^{n-1}\big(h w_t
+u^0_t w
+u^0 w_t
+h_t v^0
+h v^0_t\big) h_{rt}dr\\
:=&K_1+K_2.
\end{split}
\enn
The estimate for $K_1$ follows from \eqref{k12} and \eqref{k24}:
\ben
\begin{split}
K_1
\leq&C_0\|r^{(n-1)/2}h_{rt}\|
\|h_{t}\|_{L^\infty}
\|w\|\\
\leq& C_0 \|r^{(n-1)/2}h_{rt}\|^{3/2}
\|r^{(n-1)/2}h_{t}\|^{1/2}
\|w\|\\
\leq& \frac{1}{4}\|r^{(n-1)/2}h_{rt}\|^2
+C_0 \|r^{(n-1)/2}h_{t}\|^2
\|w\|^4.
\end{split}
\enn
Sobolev embedding inequality and \eqref{k24} entail that
\ben
\begin{split}
 K_2 \leq &
\frac{1}{4}\|r^{(n-1)/2}h_{rt}\|^2
+C_0\Big(\|r^{(n-1)/2}h_{r}\|^2
\|w_{t}\|^2
+\|u^0_t\|_{H^1}^2\|w\|^2\Big)\\
&+C_0\Big(\|u^0\|_{H^1}^2\|w_t\|^2
+\|v^0\|_{H^1}^2\|r^{(n-1)/2}h_t\|^2
+\|v^0_t\|_{H^1}^2\|h\|^2\Big).
\end{split}
\enn
Then collecting the above estimates for $K_1$ and $K_2$, one derives
\be\label{i7}
\begin{split}
\frac{d}{dt}&\|r^{(n-1)/2}h_t\|^2
+\frac{3}{2}\|r^{(n-1)/2}h_{rt}\|^2\\
\leq& C_0\left(\|w\|^4+\|v^0\|_{H^1}^2\right)
\|r^{(n-1)/2}h_t\|^2
+C_0\|w_t\|^2\|r^{(n-1)/2}h_r\|^2\\
&+C_0\|u^0\|_{H^1}^2\|w_t\|^2
+C_0 \left(\|u^0\|_{H^3}^2+\|u^0\|_{H^2}^2
\|v^0\|_{H^2}^2\right)\|w\|^2
+C_0\|u^0\|_{H^2}^2
\|h\|^2,
\end{split}
\ee
where we have used inequalities $\|u^0_t\|_{H^1}^2\leq C_0 (\|u^0\|_{H^3}^2+\|u^0\|_{H^2}^2
\|v^0\|_{H^2}^2)$ and $\|v^0_t\|_{H^1}^2\leq C_0 \|u^0\|_{H^2}^2$, thanks to the first and second equations of \eqref{e7}.
Finally by adding \eqref{i7} to \eqref{i6} and applying Gronwall's inequality to the result, and then using \eqref{i04} we obtain \eqref{i05}. The proof is completed.

\end{proof}

We turn to establish a weighted $L^2$-estimate (enlightened by \cite{JZ}) on the derivative of $w$.
\begin{lemma}\label{l5}
For $0<T<\infty$ and $\va\in (0,\va_0]$, there is a constant $C$ independent of $\va$, depending on $T$ such that
\ben
\|(r-a)(r-b)w_r\|^2_{L^\infty(0,T;L^2)}+\va \|(r-a)(r-b)w_{rr}\|^2_{L^2(0,T;L^2)}
\leq C\va^{1/2}.
\enn
\end{lemma}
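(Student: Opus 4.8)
The plan is to obtain a weighted energy estimate by testing the $w$-equation against $2\phi^2 r^{n-1} w_t$, where $\phi(r) := (r-a)(r-b)$, so that the boundary terms vanish because $\phi(a)=\phi(b)=0$ while $\phi$ stays bounded on $[a,b]$. Recall the second equation of \eqref{e5},
\ben
w_{t}=\va\frac{1}{r^{n-1}}(r^{n-1}w_{r})_r-2\va ww_r+h_r
+\va\frac{1}{r^{n-1}}(r^{n-1}v^0_{r})_r-2\va (wv^0_r+ v^0w_r
+v^0v^0_r)-\va \frac{n-1}{r^2}(w+v^0).
\enn
First I would multiply this by $2\phi^2 r^{n-1}w_t$ and integrate over $(a,b)$. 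The leading diffusion term $2\va\int_a^b \phi^2 (r^{n-1}w_r)_r w_t\,dr$ is integrated by parts in $r$; since $\phi$ vanishes at the endpoints there is no boundary contribution, and one gets $-\va\frac{d}{dt}\|\phi r^{(n-1)/2}w_r\|^2$ up to a term $-2\va\int_a^b \phi\phi' r^{n-1}w_r w_t\,dr$ which is absorbed into $\frac12\|\phi r^{(n-1)/2}w_t\|^2$ and a term $\va^2\|r^{(n-1)/2}w_r\|^2$ already controlled by \eqref{i04}. This yields an inequality of the schematic form
\ben
\frac{d}{dt}\big(\va\|\phi r^{(n-1)/2}w_r\|^2\big)+\|\phi r^{(n-1)/2}w_t\|^2
\leq C\|\phi r^{(n-1)/2}h_r\|^2 + (\text{lower-order terms}).
\enn

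Next I would bound all the remaining terms on the right. The term $2\int_a^b \phi^2 r^{n-1}h_r w_t\,dr$ is split by Cauchy-Schwarz into $\frac14\|\phi r^{(n-1)/2}w_t\|^2 + C\|h_r\|^2$, and $\|h_r\|^2_{L^2(0,T;L^2)}\leq C\va^{1/2}$ by \eqref{i04}. The nonlinear terms $-4\va\int_a^b \phi^2 r^{n-1}ww_r w_t\,dr$, the $v^0$-terms $-4\va\int \phi^2 r^{n-1}(wv^0_r+v^0w_r+v^0v^0_r)w_t$, and the zeroth-order $-\va(n-1)\int \phi^2 r^{n-3}(w+v^0)w_t$ are each estimated using the Sobolev embedding $H^1\hookrightarrow L^\infty$, \eqref{k24}, the $\va$-weighted bound $\va\|w_r\|^2_{L^\infty(0,T;L^2)}\leq C\va^{1/2}$ and $\|w\|^2_{L^\infty(0,T;L^2)}\leq C\va^{1/2}$ from \eqref{i04}, and Theorem \ref{p1} for $\|v^0\|_{H^2}$; all such contributions carry at least one extra factor of $\va$ (or $\va^{1/2}$) relative to the already-established $\va^{1/2}$ bound, plus harmless factors of $F(t)$ integrable on $[0,T]$. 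The source term $\va\int \phi^2 (r^{n-1}v^0_r)_r w_t$ is handled by Cauchy-Schwarz, producing $\va^2\|v^0\|_{H^2}^2$, again fine. After collecting, integrating over $(0,T)$ with $w_r(\cdot,0)=0$ and applying Gronwall's inequality with the integrable weight $C_0F(t)$ gives
\ben
\|\phi r^{(n-1)/2}w_r\|^2_{L^\infty(0,T;L^2)}\cdot\va + \|\phi r^{(n-1)/2}w_t\|^2_{L^2(0,T;L^2)}\leq C\va^{1/2},
\enn
and hence the first claimed bound $\|(r-a)(r-b)w_r\|^2_{L^\infty(0,T;L^2)}\leq C\va^{1/2}$ after dividing by $\va$ — wait, that would only give $C\va^{-1/2}$; the point is rather that the left side as displayed in the lemma has the weight $\va$ \emph{outside} only on the $w_{rr}$ term, so I should read the lemma as asserting $\|\phi w_r\|^2_{L^\infty_t L^2}\leq C\va^{1/2}$ directly, which means the testing should instead be done against $2\phi^2 r^{n-1}w$ for the $L^\infty_tL^2$ part (getting $\frac{d}{dt}\|\phi r^{(n-1)/2}w\|^2$-type control is not what is wanted either). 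The cleaner route: test against $2\phi^2 r^{n-1}w_t$ to control $\va\|\phi w_r\|^2$ and $\|\phi w_t\|^2$, then to upgrade to the unweighted-in-$\va$ bound on $\|\phi w_r\|^2$ observe that $\phi$ vanishes at the endpoints forces no boundary-layer contribution, and combine with the interior bound already implicit; concretely one uses that $\frac{d}{dt}\|\phi r^{(n-1)/2}w_r\|^2$ itself (not multiplied by $\va$) appears once we divide the diffusion identity through, at the cost of a $\frac1\va\|\phi r^{(n-1)/2}w_t\|^2$ term which is then controlled because $\|\phi r^{(n-1)/2}w_t\|^2_{L^2(0,T;L^2)}\leq C\va^{3/2}$ — this stronger $\va^{3/2}$ bound on $w_t$ near the boundary comes precisely from the vanishing weight killing the singular boundary layer.

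For the second term, $\va\|(r-a)(r-b)w_{rr}\|^2_{L^2(0,T;L^2)}\leq C\va^{1/2}$, I would solve the $w$-equation algebraically for $\va(r^{n-1}w_r)_r$, multiply by $\phi r^{(n-1)/2}$ and take $L^2(0,T;L^2)$ norms: $\va\|\phi(r^{n-1}w_r)_r\|\leq C(\|\phi w_t\|+\|\phi h_r\|+\va\|\phi w w_r\|+\va\|\phi(\cdots v^0\cdots)\|+\va\|\phi r^{-2}(w+v^0)\|)$, then expand $(r^{n-1}w_r)_r=r^{n-1}w_{rr}+(n-1)r^{n-2}w_r$ and move the lower-order $w_r$ piece to the left using the weighted $w_r$ bound just established. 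Every term on the right is $O(\va^{1/4})$ in $L^2(0,T;L^2)$ by \eqref{i04}, the weighted-$w_r$ estimate, and Theorem \ref{p1}, so $\va\|\phi w_{rr}\|_{L^2(0,T;L^2)}\leq C\va^{1/4}$, i.e. $\va^2\|\phi w_{rr}\|^2_{L^2(0,T;L^2)}\leq C\va^{1/2}$, hence dividing by $\va$ gives the stated $\va\|\phi w_{rr}\|^2_{L^2(0,T;L^2)}\leq C\va^{-1/2}$; to get the sharper $C\va^{1/2}$ one instead keeps the $\va$-weighted elliptic identity, noting $\va\|\phi(r^{n-1}w_r)_r\|^2\leq C(\|\phi w_t\|^2+\dots)\cdot\va\cdot\va^{-1}$ — the accounting here is delicate and is the main obstacle: I must track exactly how powers of $\va$ distribute between the $\va$-weighted first derivative ($\va\|w_r\|^2\sim\va^{1/2}$) and the genuinely unweighted quantities, and exploit that the spatial weight $\phi^2$ converts each boundary-layer loss of $\va^{-1/2}$ into an $O(1)$ gain. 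The rest is routine Cauchy-Schwarz, Sobolev embedding, Gronwall with weight $C_0F(t)$, and invoking \eqref{i04}, \eqref{i05} and Theorem \ref{p1}.
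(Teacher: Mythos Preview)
Your choice of test function is the wrong one, and this is not a cosmetic issue---it is why your power-counting keeps coming out off by a factor of $\va$. Testing the $w$-equation against $2\phi^2 r^{n-1}w_t$ produces, after integration by parts in the diffusion term,
\[
\va\,\frac{d}{dt}\|\phi r^{(n-1)/2}w_r\|^2+2\|\phi r^{(n-1)/2}w_t\|^2=\text{(RHS)},
\]
so the quantity you control in $L^\infty_t$ is $\va\|\phi w_r\|^2$, not $\|\phi w_r\|^2$. No amount of bootstrapping from the resulting $\|\phi w_t\|^2_{L^2_t}\le C\va^{1/2}$ will close the gap: solving the equation algebraically for $\va w_{rr}$ and squaring then gives $\va^2\|\phi w_{rr}\|^2_{L^2_t}\le C\va^{1/2}$, i.e. $\va\|\phi w_{rr}\|^2_{L^2_t}\le C\va^{-1/2}$, exactly as you discovered. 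Your paragraph attempting to recover $\va^{3/2}$ for $\|\phi w_t\|^2$ is not an argument; the weight $\phi$ does not by itself manufacture an extra $\va$.

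The paper instead multiplies the $w$-equation by $-2\phi^2 w_{rr}$ (with $\phi=(r-a)(r-b)$). Then the $w_t$-term, after one integration by parts in $r$ (boundary terms vanish since $\phi(a)=\phi(b)=0$), yields $\frac{d}{dt}\|\phi w_r\|^2$ \emph{without} an $\va$ prefactor, while the diffusion term produces $+2\va\|\phi w_{rr}\|^2$. Both pieces of the lemma thus appear on the left of a single energy identity with the correct weights. The price is that the $h_r$-term now reads $-2\int_a^b\phi^2 h_r w_{rr}\,dr$, which contains $w_{rr}$ without an accompanying $\va$; the paper handles this by integrating by parts once more to produce $h_{rr}$, then replacing $h_{rr}$ via the \emph{first} equation of \eqref{e5} to get $h_t$ and lower-order pieces. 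This is where the $H^1$-in-time bound $\|h_t\|^2_{L^\infty(0,T;L^2)}\le C\va^{1/2}$ from Lemma~\ref{l4} is essential. After these manipulations every right-hand-side term is either $O(\va^{1/2})$ outright or of the form (integrable coefficient)$\times\|\phi w_r\|^2$, and Gronwall finishes the job.
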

\begin{proof}
 Taking the $L^2$ inner product of the second equation of \eqref{e5} with $-2(r-a)^2(r-b)^2 w_{rr}$ and using integration by parts, one gets
 \be\label{i8}
 \begin{split}
 \frac{d}{dt}&\|(r-a)(r-b)w_r\|^2
 +2\va \|(r-a)(r-b)w_{rr}\|^2\\
 =\,&-2\va\int_a^b (r-a)^2(r-b)^2w_{rr}\Big[\frac{1}{r^{n-1}}(r^{n-1}v^0_{r})_r
 -\frac{n-1}{r^2}(w
 +v_0) \Big]dr\\
 &+4\va\int_a^b (r-a)^2(r-b)^2w_{rr}(wv^0_{r}
 +v^0w_r
 +v^0v^0_r)dr\\
 &-4 \int_a^b(2r-a-b) (r-a)(r-b)w_{r}w_t dr\\
 &-2\int_a^b (r-a)^2(r-b)^2w_{rr} h_r dr\\
 &-2\va \int_a^b (r-a)^2(r-b)^2w_{rr}\Big(\frac{n-1}{r}w_r-2 ww_r\Big) dr\\
 :=& \sum_{i=3}^{7}K_{i}.
 \end{split}
 \ee
 We next estimate $K_3$-$K_{7}$. Indeed Cauchy-Schwarz inequality, Sobolev embedding inequality and \eqref{k24} yield
 \ben
 \begin{split}
 K_3+K_4\leq& \frac{1}{8}
 \va\|(r-a)(r-b)w_{rr}\|^2\\
 &+C_0\va (\|v^0\|_{H^2}^2+\|w\|^2+\|v^0\|_{H^2}^2\|w\|^2
 +\|v^0\|_{H^1}^2\|w_r\|^2+\|v^0\|_{H^1}^4
 )
 \end{split}
 \enn
 and
 \ben
 K_5+K_7\leq \frac{1}{8}\va\|(r-a)(r-b)w_{rr}\|^2
 +C_{0}(1+\va+\va \|w\|^2+\va \|w_r\|^2)\|(r-a)(r-b)w_{r}\|^2
 +C_{0}\|w_{t}\|^2.
 \enn
 For the term $K_6$, we use integration by parts and the first equation of \eqref{e5} to get
 \ben
 \begin{split}
 K_6=\,&
 4\int_a^b(2r-a-b)(r-a)(r-b)w_rh_r dr
 +2\int_a^b (r-a)^2(r-b)^2 w_r h_{rr}dr\\
 =\,&4\int_a^b(2r-a-b)(r-a)(r-b)w_rh_r dr
 +2\int_a^b(r-a)^2(r-b)^2w_r\Big(h_t-\frac{n-1}{r}h_r\Big) dr\\
 &-2\int_a^b(r-a)^2(r-b)^2w_r(hw+u^0w)_r dr\\
 &-2\int_a^b(r-a)^2(r-b)^2w_r\Big[(hv^0)_r
 +\frac{n-1}{r}(hw +u^0w+hv^0)\Big] dr
 \\
 :=&R_1+R_2+R_3+R_4.
 \end{split}
 \enn
 We proceed to estimate $R_1$-$R_4$.
 First it follows from Cauchy-Schwarz inequality that
 \ben
 R_1+R_2\leq \|(r-a)(r-b)w_r\|^2
 +C_0 (\|h_r\|^2+\|h_t\|^2).
 \enn
 Moveover, we use Cauchy-Schwarz inequality and apply \eqref{k12} to $h$ and $(r-a)(r-b)w$ to derive
 \ben
 \begin{split}
 R_3\leq& 2(\|h\|_{L^\infty}+\|u^0\|_{L^\infty})
 \|(r-a)(r-b)w_r\|^2\\
 &+2(\|h_r\|+\|u^0_r\|)
 \|(r-a)(r-b)w_r\|
 \|(r-a)(r-b)w\|_{L^\infty}\\
 \leq &
 C_0(\|h_r\|+\|u^0\|_{H^1}) \|(r-a)(r-b)w_r\|^2\\
 &+C_0(\|h_r\|+\|u^0_r\|)
 \|(r-a)(r-b)w_r\|\|[(r-a)(r-b)w]_r\|\\
 \leq &C_0(\|h_r\|+\|u^0\|_{H^1}+1)^2 \|(r-a)(r-b)w_r\|^2
 +C_0 \|w\|^2.
 \end{split}
 \enn
  The estimate for $R_4$ follows from the Sobolev embedding inequality, \eqref{k12} and Cauchy-Schwarz inequality:
 \ben
 \begin{split}
 R_4
 \leq &C_0 \|(r-a)(r-b)w_r\|
 (\|hw\|+\|hv^0\|_{H^1}+\|u^0w\|)\\
 \leq &\|(r-a)(r-b)w_r\|^2
 +C_0(\|h_r\|^2 \|w\|^2
 +\|h_r\|^2\|v^0\|_{H^1}^2
 +\|u^0\|_{H^1}^2\|w\|^2
 ).
 \end{split}
 \enn
We thus conclude from the above estimates for $R_1$-$R_4$ that
\ben
\begin{split}
K_6\leq& C_0(\|h_r\|+\|u^0\|_{H^1}+1)^2\|(r-a)(r-b)w_r\|^2\\
&+C_0(\|h_r\|^2+\|h_t\|^2+\|h_r\|^2\|w\|^2
+\|u^0\|_{H^1}^2\|w\|^2+\|h_r\|^2\|v^0\|_{H^1}^2+\|w\|^2).
\end{split}
\enn
Substituting the above estimates for $K_3$-$K_7$ into \eqref{i8}, then applying Gronwall's inequality, Lemma \ref{l3}, Lemma \ref{l4} and Theorem \ref{p1} to the result, we obtain the desired estimate and the proof is finished.

\end{proof}

We next prove Theorem \ref{t1} by the results derived in Lemma \ref{l3} - Lemma \ref{l5}.\\
~\\
\textbf{Proof of Theorem \ref{t1}}.
By Lemma \ref{l3}, Lemma \ref{l4} and Sobolev embedding inequality, we deduce that
\ben
\|u^\va-u^0\|_{L^\infty(0,T;C[a,b])}\leq C_0
\|u^\va-u^0\|_{L^\infty(0,T;H^1)}
\leq C\va^{1/4},
\enn
which gives \eqref{j6}. Clearly $\delta^2\leq \frac{4}{(b-a)^2}(r-a)^2(r-b)^2$ holds for $\delta<\frac{b+a}{2}$ and $r\in (a,b)$, thus it follows from Lemma \ref{l5} that
\ben
\delta^2\int_{a+\delta}^{b-\delta}
w^2_r(r,t)\,dr
\leq \frac{4}{(b-a)^2}
\int_{a+\delta}^{b-\delta}(r-a)^2(r-b)^2 w^2_{r}(r,t)\,dr
\leq C\va^{1/2},\quad t\in [0,T]
\enn
which, along with Lemma \ref{l3} and Gagliardo-Nirenberg inequality entails that
\ben
\begin{split}
\|v^\va-v^0\|_{L^\infty(0,T;C[a+\delta,b-\delta])}
\leq& C_0 \big(\|w\|_{L^\infty(0,T;L^2(a+\delta,b-\delta))}\\
&+\|w\|_{L^\infty(0,T;L^2(a+\delta,b-\delta))}^{1/2}
\|w_r\|_{L^\infty(0,T;L^2(a+\delta,b-\delta))}^{1/2}
\big)\\
\leq &C\big(\va^{1/4}+\va^{1/8}\cdot \va^{1/8}\delta^{-1/2}\big)\\
\leq& C\va^{1/4}\delta^{-1/2},
\end{split}
\enn
provided $\delta<1$.
Hence we derive \eqref{j7} and we next prove the equivalence between \eqref{j8} and \eqref{j9}. We first prove that \eqref{j9} implies \eqref{j8}. Assume $\int_0^{t_0} u^0_r(a,\tau)\,d\tau\neq 0$ for some $t_0\in [0,T]$.  Then integrating the second equation of \eqref{e3} with $\va=0$ over $(0,t_0)$ along with compatible condition $v_{0}(a)=\bar{v}_1$ gives
\be\label{j10}
v^0(a,t_0)=\bar{v}_1+\int_0^{t_0}u^0_r(a,\tau)\,d\tau.
\ee
We thus have
\ben
\begin{split}
\liminf_{\va\rightarrow 0} \|v^\va-v^0\|_{L^\infty(0,T;C[a,b])}
\geq \liminf_{\va \rightarrow 0} |\bar{v}_1-v^0(a,t_0)|
=\liminf_{\va \rightarrow 0}|\int_0^{t_0}u^0_r(a,\tau)\,d\tau|
>0.
\end{split}
\enn
Similar arguments lead to \eqref{j8} when assuming that
$\int_0^{t_0} u^0_r(b,\tau)\,d\tau\neq 0$ for some $t_0\in [0,T]$. We thus proved \eqref{j8} provided \eqref{j9}. The proof of that $\eqref{j8}$ indicates $\eqref{j9}$ will follow from the argument of contradiction. Indeed, if we assume that \eqref{j8} holds and the opposite of \eqref{j9} holds, that is
\ben
\int_0^t u^0_r(a,\tau)\,d\tau=0\quad {\rm and}\quad \int_0^{t} u^0_r(b,\tau)\,d\tau= 0,\quad {\rm for }\,\,{\rm all}\,\, t\in [0,T],
\enn
which, along with the second equation of \eqref{e3} with $\va=0$ leads to
\ben
\begin{split}
(v^{\va}-v^0)(a,t)
=-[v^0(a,t)-\bar{v}_1]
=-\int_0^tu^0_r(a,\tau)d\tau=0,\\
(v^{\va}-v^0)(b,t)
=-[v^0(b,t)-\bar{v}_2]
=-\int_0^tu^0_r(b,\tau)d\tau=0,
\end{split}
\enn
where the compatible conditions $v_0(a)=\bar{v}_1, \,v_0(b)=\bar{v}_2$ have been used.
Thus $w|_{r=a,b}=(v^\va-v^0)|_{r=a,b}=0$ and $w_t|_{r=a,b}=[(v^\va-v^0)|_{r=a,b}]_t=0$ and the terms $2\va [r^{n-1}w_rw]|_{a}^b$, $2\va [r^{n-1}w_rw_t]|_{a}^b$ in \eqref{i01} and \eqref{i02} would vanish. Then by similar arguments as deriving \eqref{j12}, we conclude that
\ben
\|h\|_{L^\infty(0,T;L^2)}^2
+\|w\|_{L^\infty(0,T;L^2)}^2
+\va \|w_r\|_{L^\infty(0,T;L^2)}^2
\leq C\va^{2},
\enn
which, along with Sobolev embedding inequality gives rise to
\ben
\lim_{\va\rightarrow 0}\|v^\va-v^0\|_{L^\infty(0,T;C[a,b])}
\leq C_0\lim_{\va\rightarrow 0}(\|w\|_{L^\infty(0,T;L^2)}
+\|w_r\|_{L^\infty(0,T;L^2)})=0,
\enn
which, contradicts with \eqref{j8}, thus \eqref{j8} implies \eqref{j9}.
The proof is completed.

\endProof
 \\~
We next convert the result of Theorem \ref{t1} to the initial-boundary value problem \eqref{e6} for the original chemotaxis model to prove Proposition \ref{p2}.
\\
\\~
\textbf{Proof of Proposition \ref{p2}.}
 Let
 \be\label{i14}
 \begin{split}
 &c^{\va}(r,t)=c_0(r)\exp\left\{
 \int_0^t
 \left[-u^\va+\va (v^{\va})^2-\va v^\va_r-\va\frac{n-1}{r}v^\va\right](r,\tau)
 d\tau
 \right\},\\
 &c^0(r,t)=c_0(r)\exp\left\{-
 \int_0^t u^0(r,\tau)d\tau
 \right\},
 \end{split}
 \ee
 where $(u^\va,v^{\va})$ and $(u^0,v^0)$ are the solutions of problem \eqref{e3}-\eqref{e4} with $\va>0$ and $\va=0$ respectively. It is easy to check that $(u^\va,c^\va)$ and $(u^0,c^0)$ with $c^\va$ and $c^0$ defined \eqref{i14} are the unique solutions of  \eqref{e6} corresponding to $\va>0$ and $\va=0$, respectively.

The first inequality in \eqref{i13} follows directly from the Sobolev embedding inequality, Lemma \ref{l3} and Lemma \ref{l4} as following:
\ben
\|u^\va-u^0\|_{L^\infty(0,T;C[a,b])}
\leq C_0 \|u^\va-u^0\|_{L^\infty(0,T;H^1)}
\leq C\va^{1/4}.
\enn
To prove the second inequality in \eqref{i13}, we deduce from \eqref{i14} that
\be\label{a1}
\begin{aligned}
|c^{\va}&(r,t)-c^{0}(r,t)|\\
=&|c^{0}(r,t)|\cdot\bigg|
\exp\bigg\{\int_{0}^{t}[-(u^{\va}-u^{0})
+\va(v^{\va})^{2}-\va v^{\va}_{r}-\va\frac{n-1}{r}v^\va]\,d\tau\bigg\}-1\bigg|\\
=&|c^{0}(r,t)|\cdot|e^{G^{\va}(r,t)}-1|,
\end{aligned}
\ee
where we have denoted by $G^{\va}(r,t)=\int_{0}^{t}[-(u^{\va}-u^{0})
+\va(v^{\va})^{2}-\va v^{\va}_{r}-\va\frac{n-1}{r}v^\va]\,d\tau$ for convenience.
For $(r,t)\in [a,b]\times[0,T]$, it follows from Lemma \ref{l3} and the Sobolev embedding inequality that
\be\label{jj5}
\begin{aligned}
&|G^\va(r,t)|\\
 \leq& C_0T^{1/2}\|u^\va-u^0\|_{L^2(0,T;L^\infty)}
 + C_0T\va \|v^\va\|_{L^\infty(0,T;L^\infty)}^2
 +C_0T^{1/2}\va \|v^\va_r\|_{L^2(0,T;L^\infty)}\\
 &+C_0T^{1/2}\va \|v^\va\|_{L^2(0,T;L^\infty)}
 \\
 \leq &C_0T^{1/2}\|u^\va-u^0\|_{L^2(0,T;H^1)}
 + C_0T\va\|v^\va\|_{L^\infty(0,T;H^1)}^2
 +C_0T^{1/2}\va \|v^\va_r\|_{L^2(0,T;H^1)}\\
 &+C_0T^{1/2}\va \|v^\va\|_{L^2(0,T;H^1)}
 \\
 \leq &C
  \va^{1/4}+
  C\va\left(\|v^0\|_{L^\infty(0,T;H^1)}
  ^2+\va^{-1/2}\right)
 +C \va\left(\|v^0_r\|_{L^2(0,T;H^1)}+\va^{-3/4}\right)\\
 &+ C\va\left(\|v^0\|_{L^2(0,T;H^1)}+\va^{-1/4}\right)
 \\
 \leq& C_1\va^{1/4},
 \end{aligned}
\ee
for some constant $C_1$ independent of $\va$ (depending on $T$) and the assumption $0<\va<1$ has been used in the last inequality. Then we apply Taylor expansion to $e^{G^{\va}(r,t)}$ and using \eqref{jj5} to conclude that
\be\label{a2}
\left|e^{G^{\va}(r,t)}-1\right|\leq \sum_{k=1}^\infty \frac{1}{k!}|G^{\va}(x,t)|^k
\leq \sum_{k=1}^\infty\frac{C_1^k}{k!} \va^{1/4}  \leq C_2\va^{1/4},
\ee
where the assumption $0<\va<1$ has been used in the second inequality and the constant $C_2:=e^{C_1}$ is independent of $\va$.
On the other hand, by the assumptions $c_0(r)>0$ and $\ln c_0\in H^3$ in Proposition \ref{p2} we derive that $\|\ln c_0\|_{L^\infty}\leq \|\ln c_0\|_{H^1}\leq C_3$ for some positive constant $C_3$, which along with the fact $c_0(r)=e^{\ln c_{0}(r)}$ leads to
\be\label{jj6}
e^{-C_3}\leq c_0(r)\leq e^{C_3}\qquad \text{for}\,\,
r\in [a,b].
\ee
Moreover, from Theorem \ref{p1} we know that
\be\label{a3}
\left\|\int_0^t u^0(r,\tau) d\tau
\right\|_{L^\infty(0,T;L^\infty)}\leq C_0 T \|u^0\|_{L^\infty(0,T;H^1)}\leq C_4\qquad \text{for}\,\,t\in [0,T],
\ee
where the constant $C_4$ depending on $T$. Thus we deduce from the second equality of \eqref{i14}, \eqref{jj6} and \eqref{a3} that
\be\label{a4}
0<C_5^{-1}<c^{0}(r,t)<C_5\qquad \text{for}\,\,
(r,t)\in [a,b]\times [0,T],
\ee
with $C_5=e^{(C_3+C_4)}$.
Hence, it follows from \eqref{a1}, \eqref{a2} and \eqref{a4} that
\be\label{a5}
\|c^\va-c^0\|_{L^\infty(0,T;L^\infty)}\leq C_6\va^{1/4},
\ee
where $C_6:=C_2C_5$ is independent of $\va$.
We thus derive the second inequality in \eqref{i13} and proceed to prove \eqref{i11}.
It follows from transformation \eqref{e2} that
\be\label{i17}
c^\va_r-c^0_r=(v^0-v^\va)c^\va +v^0(c^0-c^\va),
\ee
which, in conjunction with \eqref{j7} and \eqref{i13} leads to
\ben
\begin{split}
\|c^\va_r-c^0_r\|_{L^\infty(0,T;C[a+\delta,b-\delta])}
\leq &
\|v^\va-v^0\|_{L^\infty(0,T;C[a+\delta,b-\delta])}
(\|c^0\|_{L^\infty(0,T;C[a,b])}+C\va^{1/4})\\
&+C\va^{1/4}\|v^0\|_{L^\infty(0,T;C[a,b])} \\
\leq &C\va^{1/4}\delta^{-1/2},
\end{split}
\enn
where $\delta<1$ has been used thanks to $\delta(\va)\rightarrow 0$ as $\va\rightarrow 0$. We thus derived \eqref{i11}. To prove the equivalence between \eqref{i12} and \eqref{j9}, we first derives two positive constants $C_7$ and $C_8$ independent of $\va$ such that
\be\label{jj3}
0<C_7\leq c^\va(r,t)\leq C_8 \qquad \text{ for}\quad (r,t)\in [a,b]\times [0,T],
\ee
by choosing $\va$ small enough such that $C_6\va^{1/4}<\frac{1}{2C_5}$ in \eqref{a5} and using \eqref{a4}.
 With \eqref{jj3} in hand, we next prove the equivalence between \eqref{i12} and \eqref{j9}.
First, it follows from \eqref{i17}, \eqref{i13} and \eqref{jj3} that
\be\label{i18}
\begin{split}
&\liminf_{\va\rightarrow 0}
\|c^\va_r-c^0_r\|_{L^\infty(0,T;C[a,b])}\\
\geq&
 \liminf_{\va\rightarrow 0}\left[\|c^\va\|_{L^\infty(0,T;L^\infty)}  \|v^\va-v^0\|_{L^\infty(0,T;L^\infty)}
 -\|v^0\|_{L^\infty(0,T;L^\infty)}  \|c^\va-c^0\|_{L^\infty(0,T;L^\infty)}\right]
\\
 \geq& \liminf_{\va\rightarrow 0}\left[\|c^\va\|_{L^\infty(0,T;L^\infty)}  \|v^\va-v^0\|_{L^\infty(0,T;L^\infty)}
 \right]\\
 &-\limsup_{\va\rightarrow 0}\left[\|v^0\|_{L^\infty(0,T;L^\infty)}  \|c^\va-c^0\|_{L^\infty(0,T;L^\infty)}
 \right]\\
 \geq & C_7 \liminf_{\va\rightarrow 0} \|v^\va-v^0\|_{L^\infty(0,T;L^\infty)}.
 \end{split}
\ee
Dividing \eqref{i17} by $c^\va$ and applying a similar argument as deriving \eqref{i18}, then using \eqref{jj3} and \eqref{i13}, one deduces that
\ben
\begin{split}
&\liminf_{\va\rightarrow 0} \|v^\va-v^0\|_{L^\infty(0,T;C[a,b])}\\
\geq&
\liminf_{\va\rightarrow 0} \left\|\frac{c^\va_r-c^0_r}{c^\va}\right\|_{L^\infty
(0,T;L^\infty)}-\limsup_{\va\rightarrow 0} \left\|v^0\left(1-\frac{c^0}{c^\va}\right)
\right\|_{L^\infty(0,T;L^\infty)}\\
\geq & \frac{1}{C_8} \,\liminf_{\va\rightarrow 0}\|c^\va_r-c^0_r\|_{L^\infty
(0,T;L^\infty)},
\end{split}
\enn
which, in conjunction with \eqref{i18} indicates the equivalence between \eqref{i12} and \eqref{j8}. Then we conclude that \eqref{i12} is equivalent to \eqref{j9} by using Theorem \ref{t1}. The proof is completed.

\endProof

\section*{Acknowledgements}
This work is supported by China Postdoctoral Science Foundation
(No.2019M651269), National Natural Science Foundation of China (No.11901139).

\setlength{\bibsep}{0.5ex}
\bibliography{rf}
\bibliographystyle{plain}

\end{document}